\newlength{\fixboxwidth}     
\newcommand{\R}{{\mathbb R}}
\newcommand{\N}{{\mathbb N}}
\newcommand{\Z}{{\mathbb Z}}
\renewcommand{\P}{\mathbb P}
\newcommand\expect{\operatorname{\mathbb{E}}}
\DeclareMathOperator\Uniform{unif}
\DeclareMathOperator\median{med}
\newcommand{\dint}{\,\mathup{d}}
\newcommand\mon{\textup{mon}}
\newcommand{\ind}{\mathds{1}}
\DeclareMathOperator\sgn{sgn}
\newcommand\ran{\textup{ran}}
\newcommand\deter{\textup{det}}
\newcommand\nonada{\textup{nonada}}
\newcommand\ada{\textup{ada}}
\DeclareMathOperator\App{APP}
\DeclareMathOperator\Int{INT}
\newcommand\eps{\varepsilon}
\newcommand\euler{\mathup{e}}
\newcommand\fracts[2]{{\textstyle\frac{#1}{#2}}}
\newcommand{\zeros}{\boldsymbol{0}}
\newcommand{\ones}{\boldsymbol{1}}
\newcommand\vece{\mathbf{e}}
\newcommand\veci{\mathbf{i}}
\newcommand\vecu{\mathbf{u}}
\newcommand\vecv{\mathbf{v}}
\newcommand\vecw{\mathbf{w}}
\newcommand\vecx{\mathbf{x}}
\newcommand\vecy{\mathbf{y}}
\newcommand\vecz{\mathbf{z}}
\newcommand\vecX{\mathbf{X}}
\newcommand\vecZ{\mathbf{Z}}
\newcommand\vecalpha{\boldsymbol{\alpha}}
\newcommand\vecbeta{\boldsymbol{\beta}}
\newcommand\vecdelta{\boldsymbol{\delta}}
\newcommand\veckappa{\boldsymbol{\kappa}}
\newcommand\veclambda{\boldsymbol{\lambda}}
\DeclareMathAlphabet{\mathup}{OT1}{\familydefault}{m}{n}
  \providecommand*{\toclevel@author}{999}
  \providecommand*{\toclevel@title}{0}
\theoremstyle{plain}       
\newtheorem{theorem}{Theorem}[section]      
\newtheorem{lemma}[theorem]{Lemma}     
\newtheorem{proposition}[theorem]{Proposition}      
\theoremstyle{definition}
\newtheorem{remark}[theorem]{Remark}
\numberwithin{theorem}{section}
\newcounter{proofstep}%[theorem]
\newcommand{\proofstep}[2]{\refstepcounter{proofstep} \label{#1}
	\subsubsection{Step~\arabic{proofstep}: #2}}
\newcounter{proofsubstep}[proofstep]
\newcommand{\proofsubstep}[2]{\refstepcounter{proofsubstep} \label{#1}
												\textbf{\textit{Step~\arabic{proofstep}.\arabic{proofsubstep}: #2 \\}}}
\newcommand{\proofstepref}[1]{\ref{#1}}
\newcommand{\thmref}[1]{\hyperref[#1]{Theorem~\ref*{#1}}}
\newcommand{\lemref}[1]{\hyperref[#1]{Lemma~\ref*{#1}}}
\newcommand{\propref}[1]{\hyperref[#1]{Proposition~\ref*{#1}}}
\newcommand{\corref}[1]{\hyperref[#1]{Corollary~\ref*{#1}}}
\newcommand{\remref}[1]{\hyperref[#1]{Remark~\ref*{#1}}}
\newcommand{\chapref}[1]{\hyperref[#1]{Chapter~\ref*{#1}}}
\newcommand{\secref}[1]{\hyperref[#1]{Section~\ref*{#1}}}
\newcommand{\defref}[1]{\hyperref[#1]{Definition~\ref*{#1}}}
\newcommand{\exref}[1]{\hyperref[#1]{Example~\ref*{#1}}}
\definecolor{darkgrey}{rgb}{0.4, 0.4, 0.4}
\definecolor{lightgrey}{rgb}{0.75,0.75,0.75}
\begin{document}

\title{The Difficulty of Monte Carlo Approximation
	of Multivariate Monotone Functions}
\author{Robert J. Kunsch\thanks{E-mail: robert.kunsch@uni-osnabrueck.de}}
\affil{Universit\"at Osnabr\"uck, Institut f\"ur Mathematik,
	Albrechtstr.~28a, 49076 Osnabr\"uck, Germany}
\date{\today}
\maketitle

\begin{abstract}
	We study the $L_1$-approximation of $d$-variate monotone functions
	based on information from~$n$ function evaluations.
	It is known that this problem suffers from the curse of dimensionality
	in the deterministic setting,
	that is, the number $n(\eps,d)$ of function evaluations needed
	in order to approximate an unknown monotone function
	within a given error threshold~$\eps$
	grows at least exponentially in~$d$.
	This is not the case in the randomized setting (Monte Carlo setting)
	where the complexity~\mbox{$n(\eps,d)$}
	grows exponentially in~$\sqrt{d}$ (modulo logarithmic terms) only.
	An algorithm exhibiting this complexity is presented.
	Still, the problem remains difficult as best known methods are deterministic
	if~$\eps$ is comparably small, namely~$\eps \preceq 1/\sqrt{d}$.
	This inherent difficulty is confirmed by lower complexity bounds
	which reveal a joint~$(\eps,d)$-dependency and from which we deduce
	that the problem is \emph{not weakly tractable}.
\end{abstract}

\noindent
\textbf{Keywords.\;} Monte Carlo approximation;
monotone functions;
information-based complexity;
standard information;
intractable;
curse of dimensionality.

\section{Introduction} \label{sec:intro}

Within this paper we consider
the $L_1$-approximation of $d$-variate \emph{monotone}
functions using function values as information,
\begin{equation*}
	\App : F_{\mon}^d \hookrightarrow L_1([0,1]^d) \,,\;
	f \mapsto f \,,
\end{equation*}
where the input set
\begin{equation*}
	F_{\mon}^d := \{ f: [0,1]^d \rightarrow [-1,1] \mid
						\vecx \leq \tilde{\vecx} \Rightarrow f(\vecx) \leq f(\tilde{\vecx})\}
\end{equation*}
%Here we used the partial order for~\mbox{$\vecx,\tilde{\vecx} \in \R^d$},
consists of monotonically increasing functions
with respect to the partial order on the domain.
For~\mbox{$\vecx,\tilde{\vecx} \in \R^d$}, the partial order is defined by
\begin{equation*}%\label{eq:partord}
	\vecx \leq \tilde{\vecx}
		\qquad:\Leftrightarrow\qquad
			x_j \leq \tilde{x}_j \text{ for all } j=1,\ldots,d \,.
\end{equation*}
%The subclass of monotone indicator functions -- difficult enough for its own
%and useful for some considerations -- shall be denoted
%\begin{equation*}
%		F_{\ind}^d := \{ f: [0,1]^d \rightarrow \{0,1\}
%											\mid f \in F_{\mon}^d\} \,.
%\end{equation*}

Approximation of monotone functions is not a linear problem
as defined in the book on \emph{Information-based Complexity} (IBC)
by Traub et al.~\cite{TWW88},
because the set~$F_{\mon}^d$
%of monotone functions~\mbox{$f:[0,1]^d \rightarrow [0,1]$}
is not symmetric:
For non-constant functions~$f \in F_{\mon}^d$,
the negative~$-f$ is not contained in~$F_{\mon}^d$
since it will be monotonically decreasing.
The monotonicity assumption is different
from common smoothness assumptions, yet it implies many other nice properties,
see for example Alberti and Ambrosio~\cite{AA99}.
Integration and approximation of monotone functions have been studied
in several papers~\cite{HNW11,No92mon,Papa93} to which we will refer
in the course of this paper.
Monotonicity can also be an assumption for statistical problems~\cite{GW07,RM51}.
A similar structural assumption could be convexity (more general: $k$-monotonicity),
numerical problems with such properties have been studied for example
in~\cite{CDV09,HNW11,KNP96,Kop98,NP94}.

For the problem of approximating monotone functions
with respect to the $L_1$-norm,
a deterministic algorithm is a mapping
\begin{equation*}
	A_n: F_{\mon}^d \xrightarrow{N} \R^n \xrightarrow{\phi} L_1([0,1]^d) \,,
\end{equation*}
where~$N$ is the \emph{information mapping}
\begin{equation*}
	N(f) = (y_1,\ldots,y_n)
		:= (f(\vecx_1),\ldots,f(\vecx_n)) \,.
\end{equation*}
The nodes $\vecx_1,\ldots,\vecx_n$ may be selected in an adaptive manner,
that is, the choice of the node~$\vecx_i$
may depend on previously obtained information~$y_1,\ldots,y_{i-1}$.
(One could even vary the number~$n$ of computed function values
in an adaptive way,
thereby building an algorithm with so-called \emph{varying cardinality}.)
The error of such a method is defined by the worst case,
\begin{equation*}
	e(A_n,F_{\mon}^d)
		:= \sup_{f \in F_{\mon}^d} \|A_n(f) - f\|_{L_1} \,.
\end{equation*}
A Monte Carlo method~$A_n = (A_n^{\omega})_{\omega \in \Omega}$
is a family of such mappings indexed by a random element~$\omega$
from a probability space~$(\Omega,\Sigma,\P)$,
hence for a fixed input~$f$
the realization~$A_n^{\omega}(f)$ is a random variable
with values in~$L_1([0,1]^d)$.
We assume sufficient measurability such that the error,
\begin{equation*}
	e((A_n^{\omega}),F_{\mon}^d)
		:= \sup_{f \in F_{\mon}^d} \expect \|A_n(f) - f\|_{L_1} \,,
\end{equation*}
is well defined.
We aim to compare the deterministic setting with the Monte Carlo setting
in terms of the minimal error achievable with
an information budget~$n \in \N_0$,
\begin{equation*}
	e^{\deter}(n,F_{\mon}^d)
		:= \inf_{A_n} e(A_n,F_{\mon}^d)
	\qquad \text{vs.} \qquad
	e^{\ran}(n,F_{\mon}^d)
		:= \inf_{(A_n^{\omega})} e((A_n^\omega),F_{\mon}^d) \,,
\end{equation*}
or the complexity for a given error threshold~$\eps > 0$
and dimension~$d \in \N$,
\begin{align*}
	n^{\deter}(\eps,F_{\mon}^d)
		&:= \inf \{n \in \N_0 \mid
							\exists A_n \colon e(A_n,F_{\mon}^d) \leq \eps\}\\
	\text{vs.} \qquad
	n^{\ran}(\eps,F_{\mon}^d)
		&:= \inf \{n \in \N_0 \mid
							\exists (A_n^{\omega}) \colon
								e((A_n^{\omega}),F_{\mon}^d) \leq \eps\} \,.
\end{align*}
Section~\ref{sec:Simple} is a collection of results that
-- if not directly stated in previous papers --
can be shown with well known techniques.
In Section~\ref{sec:Order}
we show that for fixed dimension~$d$ the order of convergence
for the $L_1$-approximation of monotone functions
cannot be improved by randomization.
The curse of dimensionality for deterministic approximation,
see Hinrichs, Novak and Wo\'zniakowski~\cite{HNW11},
is recalled in Section~\ref{sec:Curse}.
Section~\ref{sec:UBs} contains the good news that the curse is broken
by Monte Carlo methods, namely, we show the combined upper bound
\begin{align*}
	n^{\ran}(\eps,F_{\mon}^d)
		\,\leq\, \min\Biggl\{&
			\exp\left[C \, \frac{\sqrt{d}}{\eps}
									\, \left( 1 + \log \frac{d}{\eps}\right)^{3/2}
					\right],\,
			\exp\left[d \, \log \frac{d}{2 \eps}\right]
			\Biggr\} \,,
\end{align*}
with some numerical constant~$C > 0$, see \thmref{thm:monoUBs}.
Here, the first bound is achieved by a proper Monte Carlo method
and applies in the pre-asymptotic regime,
whereas the second bound is achieved by a deterministic algorithm
and applies for small error thresholds~\mbox{$\eps \preceq 1/\sqrt{d}$} (modulo logarithmic terms).
Lower bounds in the Monte Carlo setting are found in Section~\ref{sec:LBs},
%	and \mbox{$d \geq d_0 \, \left(\eps_0 / \eps\right)^2$}
we prove
\begin{equation*}
	n^{\ran}(\eps,F_{\mon}^d)
		\,>\, \nu \, \exp\left[c \, \frac{\sqrt{d}}{\eps}\right] \,,
	\qquad \text{for \mbox{$\eps_0 \, \sqrt{d_0/d} \leq \eps \leq \eps_0$}
		and \mbox{$d \geq d_0$},}
\end{equation*}
with numerical constants $\nu,c > 0$, see \thmref{thm:LB}.
There is a constraint on~$\eps$,
which is not surprising as it fits to the observation
that for smaller~$\eps$ best known algorithms are deterministic
and we have a different joint $(\eps,d)$-dependency in that regime.
However, by monotonicity of the $\eps$-complexity,
we can still conclude
\begin{equation*}
	n^{\ran}(\eps,F_{\mon}^d)
		\,>\, \nu \, \exp\left[c' \, d\right] \,,
	\qquad \text{for~$0 < \eps \leq \eps_0 \, \sqrt{d_0/d}$
			and \mbox{$d \geq d_0$},}
\end{equation*}
where~$c' = c/(\eps_0\sqrt{d_0})$.
Hence, the lower bounds match the upper bounds except for logarithmic terms
in the exponent.
The bad news is:
For moderately decaying error thresholds $\eps = \eps_0 \sqrt{d_0/d}$,
the Monte Carlo complexity depends already exponentially on~$d$,
we conclude that the problem is \emph{not weakly tractable},
see \remref{rem:monMCLBintractable}.

This paper is concerned
with real-valued monotone functions~\mbox{$f:[0,1]^d \to [-1,1]$}.
A closely related problem is
the approximation of \emph{Boolean} monotone functions
\mbox{$f: \{0,1\}^d \rightarrow \{0,1\}$}.
The algorithm we present in Section~\ref{sec:UBs}
is inspired by an approximation method for Boolean monotone functions
due to Bshouty and Tamon~\cite{BT96}.
The Monte Carlo lower bounds given in Section~\ref{sec:LBs}
are actually obtained by a reduction
to the approximation of Boolean monotone functions.
It is then a modification of a lower bound proof
which can be found in Blum, Burch and Langford~\cite{BBL98}.
Similarly to the real-valued setting in \secref{sec:Curse},
one can show the curse of dimensionality for deterministic approximation
of Boolean monotone functions as well,
see the author's PhD thesis~\cite[Theorem~4.5]{Ku17}.
So even for the simpler problem we can state
that Monte Carlo breaks the curse.
The main difference to real-valued monotone functions is
that the concept of order of convergence,
see~\secref{sec:Order}, is meaningless
for a discrete problem such as the approximation of Boolean functions.

\section{Survey on deterministic approximation}
\label{sec:Simple}

\subsection{The classical approach -- order of convergence}
\label{sec:Order}

The classical approach for the numerical analysis of multivariate problems
is to fix the dimension~$d$ and to study the order at which the error
$e(n)$ converges to zero as the information budged~$n$ grows.
We will use the common asymptotic notation
\begin{equation*}
	a_n \preceq e(n) \preceq b_n
	\qquad:\Leftrightarrow\qquad
	\exists\, c,C > 0 \colon \; c \, a_n \leq e(n) \leq C \, b_n \,.
\end{equation*}
If $a_n \preceq e(n) \preceq a_n$, we simply write $a_n \asymp e(n)$.
The hidden constants~$c$ and $C$ may depend on problem parameters
such as the dimension.
Sometimes this dependency is in a very unpleasant way.

As an example, the order of convergence has been studied for the problem of
approximating the integral of monotone functions,
\begin{equation*}
	\Int: F_{\mon}^d \rightarrow \R, \quad
					f \mapsto \int_{[0,1]^d} f \dint\vecx \,,
\end{equation*}
based on finitely many function evaluations.
Interestingly, for this problem
adaption makes a difference in the randomized setting
(at least for~\mbox{$d = 1$}),
but non-adaptive randomization helps only for~\mbox{$d \geq 2$}
to speed up the convergence compared to deterministic methods.
In the univariate case Novak~\cite{No92mon} showed
\begin{multline*}
	e^{\ran,\ada}(n,\Int,F_{\mon}^1) \asymp n^{-3/2} \\
		\prec e^{\ran,\nonada}(n,\Int,F_{\mon}^1)
			\asymp e^{\det}(n,\Int,F_{\mon}^1) \asymp n^{-1} \,.
\end{multline*}
Papageorgiou~\cite{Papa93} examined
the integration of $d$-variate monotone functions,
for dimensions~\mbox{$d \geq 2$} we have
\begin{multline*}
	e^{\ran,\ada}(n,\Int,F_{\mon}^d)
		\asymp n^{-1/d - 1/2} \\
	\preceq
	e^{\ran,\nonada}(n,\Int,F_{\mon}^d)
		\preceq n^{- 1/(2d) - 1/2}\\
	\prec
	e^{\deter}(n,\Int,F_{\mon}^d) \asymp n^{-1/d} \,,
\end{multline*}
where the hidden constants depend on~$d$.
It is an open problem to find lower bounds for the non-adaptive Monte Carlo error
that actually show that adaption is better for~\mbox{$d \geq 2$} as well,
but from the one-dimensional case we conjecture it to be like that.

For the $L_1$-approximation, however,
the order of convergence does not reveal any differences
between the various algorithmic settings.
Applying Papageorgiou's proof technique to the problem of $L_1$-approximation,
we obtain the following theorem.
\begin{theorem}\label{thm:MonAppOrderConv}
	For the $L_1$-approximation of monotone functions, for fixed dimension~$d$
	and \mbox{$n\rightarrow \infty$}, we have the following asymptotic behaviour,
	\begin{equation*}
		e^{\ran}(n,\App,F_{\mon}^d)
			\asymp e^{\deter}(n,\App,F_{\mon}^d)
			\asymp n^{-1/d} \,,
	\end{equation*}
	where the implicit constants depend on~$d$.
\end{theorem}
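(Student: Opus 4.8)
The plan is to establish matching upper and lower bounds of order $n^{-1/d}$ for $e^{\deter}$, which then automatically gives the same two-sided bound for $e^{\ran}$ since $e^{\ran}(n,\cdot) \le e^{\deter}(n,\cdot)$ always, and a randomized lower bound of the same order must be argued separately (or folded into the same construction). I would organize the proof into three parts.

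First, the deterministic upper bound $e^{\deter}(n,\App,F_{\mon}^d) \preceq n^{-1/d}$. The idea is a simple quadrature-free approximation: partition $[0,1]^d$ into $m^d$ congruent subcubes using a uniform grid of mesh $1/m$ (so $n \asymp m^d$, i.e.\ $m \asymp n^{1/d}$). Evaluate $f$ at the grid points (say, the "lower-left" corner of each subcube, or at all $(m+1)^d$ vertices) and define $A_n(f)$ on each subcube as a constant (or piecewise-constant surrogate) built from these values. By monotonicity, on a subcube $Q$ with lower corner $\vecx$ and upper corner $\vecx + (1/m)\ones$ we have $f(\vecx) \le f \le f(\vecx + (1/m)\ones)$ pointwise, so the local $L_1$-error on $Q$ is at most $\Vol(Q) \cdot (f(\vecx+(1/m)\ones) - f(\vecx))$. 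Summing over a "diagonal stack" of subcubes, the telescoping increments $f(\vecx_{k+1}) - f(\vecx_k)$ along any monotone chain sum to at most $2$ (the total oscillation of $f$, bounded by the range $[-1,1]$). Counting how many such diagonal chains there are and how the subcubes distribute among them, the total $L_1$-error is $O(m^{-1}) = O(n^{-1/d})$. This is the standard monotone-rearrangement / bracketing argument.

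Second, the lower bound $e^{\deter}(n,\App,F_{\mon}^d) \succeq n^{-1/d}$ — and ideally the randomized version simultaneously, following Papageorgiou's technique as the theorem statement indicates. The idea is a fooling/bump construction: given any $n$ nodes (adaptively chosen, but for the lower bound one reduces to non-adaptive by the usual argument that adaption does not help for such linear-information lower bounds, or one argues directly), one finds a region of $[0,1]^d$ of volume $\succeq 1/n$ avoiding all nodes on which one can perturb a fixed monotone function (e.g.\ the "staircase" $f_0(\vecx) = $ a monotone function of $x_1 + \dots + x_d$) up or down by an amount $\succeq n^{-1/d}$ while staying in $F_{\mon}^d$. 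For the Monte Carlo lower bound, one uses a family of such perturbations indexed by $\pm$ signs on many disjoint bumps and applies a Bakhvalov-type / averaging argument (Yao's principle): put a uniform random sign on each bump; any fixed node set sees only a bounded fraction of bumps, so in expectation a constant fraction of bumps contributes its full local error $\succeq n^{-1/d} \cdot \Vol \asymp n^{-1/d}\cdot n^{-1}$ each, and there are $\asymp n$ of them, giving total expected error $\succeq n^{-1/d}$. The subtlety is that the bumps must be compatible with monotonicity: one places them along an anti-chain (a "diagonal slab" where $x_1 + \dots + x_d \approx d/2$) so that independently flipping each bump's sign preserves monotonicity, because bumps on an anti-chain are pairwise incomparable and each individual bump can be realized as a small monotone perturbation supported near a point.

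The main obstacle I expect is the lower-bound bookkeeping: making the anti-chain bump construction precise so that (a) there are genuinely $\asymp n$ disjoint bumps each of volume $\asymp 1/n$ and "height" $\asymp n^{-1/d}$, (b) arbitrary $\pm$ sign patterns on them yield functions in $F_{\mon}^d$, and (c) the diameter of the anti-chain slab in the relevant direction scales correctly so that the achievable bump height is indeed $\asymp n^{-1/d}$ and not smaller. One must be careful that the "slab" around a level set of $x_1+\dots+x_d$ has the right width, and that local monotone bumps of the needed height fit inside boxes of side $\asymp n^{-1/d}$. Once the geometry is set up, the averaging argument for the randomized lower bound is routine (Yao/Bakhvalov), and combined with the trivial inequality $e^{\ran} \le e^{\deter}$ and the deterministic upper bound, all four quantities are squeezed to $\asymp n^{-1/d}$.
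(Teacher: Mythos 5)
Your upper bound plan coincides with the paper's: split $[0,1]^d$ into $m^d$ subcubes with $m \asymp n^{1/d}$, use the $(m\pm1)^d$ grid values for a piecewise-constant reconstruction, and telescope the increments along the $\preceq d\,m^{d-1}$ diagonal chains; this gives $\preceq d/m \asymp d\,n^{-1/d}$. That part is fine and essentially the same as what the paper does.

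The lower bound is where the proposal breaks down, and the obstacle you flagged in item (a) is not mere bookkeeping but a structural impossibility. You propose placing $\asymp n$ disjoint bumps, each of volume $\asymp 1/n$ and height $\asymp n^{-1/d}$, on an anti-chain of the subcube lattice (the level $|\veci|_1 \approx d(m-1)/2$ in $\{0,\dots,m-1\}^d$). But an anti-chain at a single level $|\veci|_1=k$ contains only about $m^{d-1}/\sqrt{d}$ subcubes (the middle multinomial layer), and if each subcube is to have volume $\asymp 1/n$, then $m \asymp n^{1/d}$ and the anti-chain holds only $\asymp n^{(d-1)/d}/\sqrt{d} \ll n$ bumps. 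So there simply are not $\asymp n$ disjoint bumps of the required size on any anti-chain. If you instead maximize the height to $\asymp 1$ (a $\pm 1$ sign flip, which is the largest perturbation compatible with independent flips on an anti-chain) and shrink the side length so that the anti-chain has $\asymp n$ members, you are forced to take side $s \asymp n^{-1/(d-1)}$, and the Bakhvalov/Yao average then yields an error lower bound of order $n^{-1/(d-1)}$ — strictly weaker than $n^{-1/d}$ for $d\ge 2$, hence not matching the upper bound. (That large-bump, anti-chain construction is essentially the one used for the curse of dimensionality, where the relevant conclusion is $\eps$-independent of $n$ up to $n\asymp 2^d$, not a sharp convergence rate.)

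The paper's construction removes exactly this bottleneck by \emph{not} restricting to an anti-chain. It places a small bump on \emph{every} one of the $m^d$ subcubes, atop the graded staircase $f_0|_{C_{\veci}} = \frac{2|\veci|_1}{d(m-1)+1}-1$. Because any two comparable indices $\veci < \vecj$ satisfy $|\veci|_1 < |\vecj|_1$, the staircase step $\asymp 1/(dm)$ provides exactly one unit of slack, and each subcube can be independently shifted up or down by one step while preserving monotonicity. Averaging over the $2^{m^d}$ sign patterns then gives error $\succeq (1 - n/m^d)\cdot\frac{1}{d(m-1)+1}$, and choosing $m^d \asymp 2n$ yields the sharp $\succeq \frac{1}{d}\,n^{-1/d}$ for both deterministic and Monte Carlo algorithms. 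The missing idea in your proposal is thus: use all $m^d$ subcubes with small bumps of height $\asymp 1/(dm)$ (monotonicity enforced by the fine staircase's slack), rather than a few large bumps on an anti-chain. Replacing height by coverage is precisely what upgrades the exponent from $1/(d-1)$ to $1/d$.
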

\begin{proof}
	We split~\mbox{$[0,1]^d$} into $m^d$~subcubes indexed
	by~\mbox{$\veci \in \{0,1,\ldots,m-1\}^d$}:
	\begin{equation*} %\label{eq:m^d-cubes}
		C_{\veci} := \bigtimes_{j = 1}^{d} I_{i_j}
	\end{equation*}
	where~\mbox{$I_i := [{\textstyle \frac{i}{m}}, {\textstyle \frac{i+1}{m}})$}
	for~\mbox{$i=0,1,\ldots,m-2$}
	and~\mbox{$I_{m-1} := [\textstyle{\frac{m-1}{m}},1]$}.
	
	For the lower bounds, we consider fooling functions~$f = f_{\vecdelta}$
	that are constant on each of the subcubes, in detail,
	\begin{equation*}
		f|_{C_{\veci}} \,=\, \frac{2(|\veci|_1+\delta_{\veci})}{d(m-1)+1} - 1
	\end{equation*}
	with $\delta_{\veci} \in \{0,1\}$
	and \mbox{$|\veci|_1 := i_1 + \ldots + i_d$}.
	Obviously, such functions are monotonically increasing.
%	For a deterministic algorithm using $n < m^d$ function values,
%	when applied to such a function,
%	we do not know the function on at least~\mbox{$m^d-n$}~subcubes.
%	There exist a maximal function~$f_{+}$ and a minimal function~$f_{-}$
%	fitting to the computed function values, 
%	and the \emph{so-called} diameter of information is at least
%	\begin{equation*}
%		\|f_{+} - f_{-}\|_1
%			\geq \left(1 - \frac{n}{m^d}\right) \, \frac{2}{d(m-1)+1} \,.
%	\end{equation*}
%	Hence we have the error bound
%	\begin{equation*}
%		e^{\deter}(n,\App,F_{\mon}^d)
%			\geq \left(1 - \frac{n}{m^d}\right) \, \frac{1}{d(m-1)+1} \,.
%	\end{equation*}
	In order to obtain lower bounds that hold for Monte Carlo algorithms,
	we employ a minimax argument,
	also known as \emph{Bakhvalov's trick}~\cite{Bakh59}.
	Namely, we average over all possible settings
	of~$\vecdelta = (\delta_{\veci})$.
	For any information~$\vecy$,
	let \mbox{$I^{\vecy} \subset \{0,\ldots,m-1\}^d$}
	be the set of indices~$\veci$
	where we do not know anything about the function
	on the corresponding subcube~$C_{\veci}$.
	For an algorithm which uses $n < m^d$ function values,
	we have \mbox{$\# I^{\vecy} \geq m^d-n$}.
	Considering an arbitrary Monte Carlo algorithm
	$A_n^{\omega} = \phi^{\omega} \circ N^{\omega}$
	we can write
	\begin{multline*}
		e((A_n^{\omega}),F_{\mon}^d)
			\,=\, \sup_{f \in F_{\mon}^d} \expect \|A_n^{\omega}(f) - f\|_{L_1}
			\,\geq\, 2^{-m^d} \sum_{(\delta_{\veci}) \in \{0,1\}^{m^d}}
						\expect \|A_n^{\omega}(f_{\vecdelta}) - f_{\vecdelta}\|_{L_1} \\
			\,\geq\, \expect 2^{-m^d} \sum_{\vecdelta = (\delta_{\veci})}
						\sum_{\substack{\veci \in I^{\vecy}\\
														\text{where }\vecy := N^{\omega}(f_{\vecdelta})\\
														\text{and }g := \phi^{\omega}(\vecy)}}
					\int_{C_{\veci}}
						\underbrace{\fracts{1}{2}
													\left(\Bigl|\fracts{2|\veci|_1}{d(m-1)+1}
																			- g(\vecx)
																\Bigr|
															 +\Bigl|\fracts{2(|\veci|_1 + 1)}{d(m-1)+1}
																			- g(\vecx)
																\Bigr|
													\right)
												}_{\textstyle \geq \frac{1}{d\,(m-1)+1}}
							\dint \vecx\\
			\,\geq\, \left(1 - \frac{n}{m^d}\right) \, \frac{1}{d(m-1)+1} \,.
	\end{multline*}
	Choosing~\mbox{$m := \lceil (2n)^{1/d} \rceil$}, %\mbox{$m := \lceil \sqrt[d]{2n} \rceil$},
	we obtain the general lower bound
	\begin{equation*}
		e^{\deter}(n,\App,F_{\mon}^d)
			\,\geq\, e^{\ran}(n,\App,F_{\mon}^d)
			\,\geq\, \frac{1}{2 \, (d \cdot (2n)^{1/d} + 1)}
			\,\geq\, \frac{1}{6 \, d} \, n^{-1/d} \,.
	\end{equation*}
	
	For the upper bounds, we give a deterministic, non-adaptive algorithm
	with cardinality~\mbox{$(m-1)^d$},
	i.e.~when allowed to use $n$~function values,
	we choose~\mbox{$m := \lfloor n^{1/d} \rfloor + 1$}.
	Splitting the domain into $m^d$ subcubes as above,
	we compute \mbox{$(m-1)^d$} function values
	at the corner points in the interior~\mbox{$(0,1)^d$} of the domain.
	For each subcube we take the medium possible value based on our knowledge
	on the function~$f$ in the lower and upper corners
	of that particular subcube,
	where at the boundary without computing function values we assume
	\begin{equation*}
		f|_{[0,1)^d \setminus (0,1)^d} = -1
			\quad\text{and}\quad
		f|_{[0,1]^d \setminus [0,1)^d} = 1 \,.
	\end{equation*}
	The subcubes can be grouped into diagonals,
	where the upper corner of one subcube touches the lower corner of the next
	subcube.
	Each diagonal can be uniquely represented
	by an index~$\veci$ with at least one $0$-entry,
	which thus belongs to the lowest subcube~$C_{\veci}$ of that diagonal,
	in total we have \mbox{$m^d - (m-1)^d \leq d \, m^{d-1}$} diagonals.
	Due to monotonicity, the contribution of a single diagonal
	to the~$L_1$-error is at most~$m^{-d}$,
	so altogether we have
	\begin{equation*}
		e(A_m^d,f)
			\,\leq\, \frac{d \, m^{d-1}}{m^d}
			\,=\, \frac{d}{m}
			\,=\, \frac{d}{\lfloor n^{1/d} \rfloor + 1}
			\,\leq\, d \, n^{-1/d} \,.
	\end{equation*}
	For details compare also the proofs for integration
	in Papageorgiou~\cite{Papa93}.
\end{proof}

The Monte Carlo lower bound contained in the above result
does also hold for algorithms with varying cardinality,
see~\cite[Theorem~4.2]{Ku17}.

\begin{remark}[On the impracticality of these results]
	\label{rem:MonAppOrderConv}
	The above proof yields the explicit estimate
	\begin{equation*}
		\frac{1}{6 \, d} \, n^{-1/d}
			\,\leq\, e^{\ran}(n,\App,F_{\mon}^d)
			\,\leq\, e^{\deter}(n,\App,F_{\mon}^d)
			\,\leq\, d \, n^{-1/d} \,.
	\end{equation*}
	At first glance, this estimate appears friendly, with constants
	differing only polynomially in~$d$.
	This optimistic view, however, collapses dramatically
	when switching to the notion of $\eps$-complexity
	for~\mbox{$0<\eps<1$}:
	\begin{equation*}
		\left(\frac{1}{6 \, d}\right)^d \, \eps^{-d}
			\,\leq\, n^{\ran}(\eps,\App,F_{\mon}^d)
			\,\leq\, n^{\deter}(\eps,\App,F_{\mon}^d)
			\,\leq\, d^d \, \eps^{-d} \,.
	\end{equation*}
	Here, the constants differ superexponentially in~$d$.
	Of course, lower bounds for low dimensions also hold for higher
	dimensions,
	so given the dimension~$d_0$, one can optimize over~\mbox{$d=1,\ldots,d_0$}.
	Still, the upper bound is impractical for high dimensions since it
	is based on algorithms that use exponentially~(in~$d$) many function values.
	
	In fact, for the deterministic setting we cannot avoid a bad~$d$-dependency,
	we suffer from the curse of dimensionality, see \secref{sec:Curse}.
	For the randomized setting, however,
	we can significantly reduce the $d$-dependency (which is still high),
	at least as long as~$\eps$ is fixed,
	see \secref{sec:UBs}.
	To summarize, if we only consider the order of convergence,
	we might think that randomization does not help,
	but for high dimensions randomization actually \emph{does} help,
	at least in the preasymptotic regime.
\end{remark}

\subsection{Curse of dimensionality in the deterministic setting}
\label{sec:Curse}

Hinrichs, Novak, and Wo\'{z}niakowski~\cite{HNW11} have shown
that the integration
(and hence also the $L_p$-approximation, \mbox{$1 \leq p \leq \infty$})
of monotone functions suffers from the curse of dimensionality
in the deterministic setting.

\begin{theorem}[Hinrichs, Novak, Wo\'{z}niakowski 2011] \label{thm:HNW11}
	The $L_1$-approximation of monotone functions
	suffers from the curse of dimensionality in the worst case setting.
	In detail,
	\begin{equation*}
		e^{\deter}(n,F_{\mon}^d)
			\,\geq\, \left(1 - n \, 2^{-d}\right) \,,
	\end{equation*}
	so for~\mbox{$0 < \eps \leq 1/2$}
	we have
	\begin{equation*}
		n^{\deter}(\eps,F_{\mon}^d)
			\,\geq\, 2^{d-1} \,.
	\end{equation*}
\end{theorem}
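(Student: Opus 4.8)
The plan is a standard adversary (fooling-function) argument. Fix a deterministic algorithm $A_n=\phi\circ N$ with adaptively chosen nodes $\vecx_1,\ldots,\vecx_n$, and let an adversary supply the values $y_1,\ldots,y_n\in\{-1,+1\}$ on the fly. From the $n$ data pairs $(\vecx_i,y_i)$ we build two monotone functions $f^+\ge f^-$ in $F_{\mon}^d$ that are both consistent with the data. Since $A_n$ is deterministic, the adaptive choice of $\vecx_{i+1}$ depends only on $y_1,\ldots,y_i$, so running $A_n$ on $f^+$ or on $f^-$ produces exactly the same nodes and the same information string $(y_1,\ldots,y_n)$, hence the same output $g:=\phi(y_1,\ldots,y_n)$. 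Therefore $e(A_n,F_{\mon}^d)\ge\max\{\|g-f^+\|_{L_1},\|g-f^-\|_{L_1}\}\ge\tfrac12\|f^+-f^-\|_{L_1}$, and the whole proof reduces to arranging $\|f^+-f^-\|_{L_1}\ge 2(1-n2^{-d})$.

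For the construction, let $f^+$ be the pointwise-largest monotone $[-1,1]$-valued function with $f^+(\vecx_i)=y_i$, explicitly $f^+(\vecx)=\min\bigl(1,\min\{y_i:\vecx\le\vecx_i\}\bigr)$ (empty minimum $=1$), and let $f^-$ be the pointwise-smallest one, $f^-(\vecx)=\max\bigl(-1,\max\{y_i:\vecx_i\le\vecx\}\bigr)$. When the algorithm queries $\vecx_i$, write $p_i:=\prod_{j=1}^d(x_i)_j=\Vol\{\vecx:\vecx\le\vecx_i\}$ and $q_i:=\prod_{j=1}^d(1-(x_i)_j)=\Vol\{\vecx:\vecx\ge\vecx_i\}$, and let the adversary answer $y_i:=+1$ if $p_i\ge q_i$ and $y_i:=-1$ otherwise. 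A short check shows that along any two comparable queries $\vecx_i\le\vecx_k$ one has $y_i\le y_k$ (because then $p_i\le p_k$ and $q_i\ge q_k$), so $f^+$ and $f^-$ are genuinely well-defined, monotone, $[-1,1]$-valued, consistent with all the data, and satisfy $f^-\le f^+$ pointwise.

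For the quantitative step, estimate the forced volumes: $1-f^+(\vecx)=\max\{1-y_i:\vecx\le\vecx_i\}\le\sum_{i:\vecx\le\vecx_i}(1-y_i)$ and likewise $1+f^-(\vecx)\le\sum_{i:\vecx_i\le\vecx}(1+y_i)$, so integrating over $[0,1]^d$ gives $\int(1-f^+)\le\sum_i(1-y_i)p_i$ and $\int(1+f^-)\le\sum_i(1+y_i)q_i$. By the choice of $y_i$ one has $(1-y_i)p_i+(1+y_i)q_i=2\min(p_i,q_i)$, hence
\[
 \|f^+-f^-\|_{L_1}=\int_{[0,1]^d}(f^+-f^-)\dint\vecx=2-\int(1-f^+)-\int(1+f^-)\ge 2-2\sum_{i=1}^n\min(p_i,q_i).
\]
The single inequality carrying the dimension is coordinatewise AM--GM: $\min(p_i,q_i)^2\le p_iq_i=\prod_{j=1}^d(x_i)_j\,(1-(x_i)_j)\le\prod_{j=1}^d\tfrac14=4^{-d}$, so $\min(p_i,q_i)\le 2^{-d}$. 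Thus $\|f^+-f^-\|_{L_1}\ge 2(1-n2^{-d})$, giving $e^{\deter}(n,F_{\mon}^d)\ge 1-n2^{-d}$; and if $n<2^{d-1}$ then $1-n2^{-d}>\tfrac12\ge\eps$, so no algorithm with fewer than $2^{d-1}$ evaluations achieves error $\le\eps$, i.e.\ $n^{\deter}(\eps,F_{\mon}^d)\ge 2^{d-1}$.

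I expect the only genuinely fiddly point to be the bookkeeping around adaptivity and the order-consistency of the adversary's answers --- one must verify carefully that $f^\pm$ reproduce the exact information string fed to $A_n$, and that the argument survives for algorithms with varying cardinality (let the adversary keep answering until $A_n$ halts). An alternative, and the route taken by Hinrichs, Novak and Wo\'zniakowski, is to prove the bound first for $\Int$: from the display (and $f^+\ge f^-$) one gets $\int f^+-\int f^-\ge 2(1-n2^{-d})$ verbatim, and then any $L_1$-approximation algorithm, composed with integration of its output, is an integration algorithm of no larger error using the same $n$ function values, so $e^{\deter}(n,\App,F_{\mon}^d)\ge e^{\deter}(n,\Int,F_{\mon}^d)\ge 1-n2^{-d}$ and the $\eps$-complexity bound follows as above.
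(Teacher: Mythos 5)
Your proof is correct and is essentially the same fooling-function argument the paper sketches (and that Hinrichs--Novak--Wo\'zniakowski carry out in detail), hinging on the key observation that each query forces the function on a subregion of volume at most $2^{-d}$. The only cosmetic difference is the adversary's answer rule: you answer $y_i=\sgn(p_i-q_i)$ with $p_i=\prod_j(x_i)_j$, $q_i=\prod_j(1-(x_i)_j)$, whereas the paper centers on $f_{\boxbslash}(\vecx)=\sgn\bigl(\sum_j x_j-\tfrac d2\bigr)$; both rules are order-consistent and yield $\min(p_i,q_i)\le 2^{-d}$ (yours directly via $t(1-t)\le\tfrac14$, the paper's via AM--GM), so the two variants are interchangeable.
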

\begin{proof}[Idea of the proof]
	Any deterministic algorithm will fail
	to distinguish the diagonal split function
	$f_{\boxbslash}(\vecx) := \sgn\left(\sum_{j=1}^d x_j - \frac{d}{2}\right)$
	from other monotone functions~$F_{\mon}^d$
	which yield the same information.
	No matter what information mapping~$N$ we take,
	there will exist such indistinguishable functions with a big~$L_1$-distance
	to $f_{\boxbslash}$,
	since in this situation each function value only provides knowledge
	about a subdomain of volume at most~$2^{-d}$,
	see~\cite{HNW11} for details.
\end{proof}

Note that the \emph{initial error}~$e(0,F_{\mon}^d)$ is~$1$,
this means, if we do not know any function value,
the best guess is the zero function.
Thus the theorem above states that in order to merely halve the initial error
we already need exponentially (in~$d$) many function values.
The curse of dimensionality can be broken via a Monte Carlo method,
see \secref{sec:UBs},
but we still have intractability in the randomized setting,
see \secref{sec:LBs}.
In contrast, for \emph{integration} the standard Monte Carlo method
\begin{equation*}
	M_n(f) \,:=\, \frac{1}{n} \sum_{i=1}^n f(\vecX_i)
		\,\approx\, %\int_{[0,1]^d} f(\vecx) \dint \vecx = 
						\Int(f) \,,
	\qquad \text{where $\vecX_i \stackrel{\text{iid}}{\sim} \Uniform([0,1]^d)$,}
\end{equation*}
easily achieves \emph{strong polynomial tractability},
namely~$n(\eps,\Int,F_{\mon}^d) \leq \lceil\eps^{-2}\rceil$,
where the dimension~$d$ does not play any role.

\section{Breaking the curse with Monte Carlo}
\label{sec:UBs}

We present and analyse a new algorithm for the approximation of
monotone functions on the unit cube.
It is the first algorithm to show that for this problem
the curse of dimensionality does not hold in the randomized setting.
The idea of the algorithm has been inspired by a method
for learning Boolean monotone functions due to Bshouty and Tamon~\cite{BT96}.

The method is based on the Haar wavelet decomposition
of the function~$f$.
We define dyadic cuboids on~\mbox{$[0,1]^d$}
indexed by~\mbox{$\vecalpha \in \N^d$}, %which shall be identified
or equivalently by an index vector pair~\mbox{$(\veclambda,\veckappa)$}
with~\mbox{$\veclambda \in \N_0^d$}
and~\mbox{$\veckappa \in \N_0^d$},
\mbox{$\kappa_j < 2^{\lambda_j}$},
such that~\mbox{$\alpha_j = 2^{\lambda_j} + \kappa_j$}
for~\mbox{$j = 1,\ldots,d$}:
\begin{equation*}
	C_{\vecalpha} = C_{\veclambda,\veckappa}
		:= \bigtimes_{j=1}^d I_{\alpha_j} \,,
\end{equation*}
where
\begin{equation*}
	I_{\alpha_j} = I_{\lambda_j,\kappa_j}
		:=\begin{cases}
				[\kappa_j \, 2^{-\lambda_j} , (\kappa_j+1) \, 2^{-\lambda_j})
					\quad&\text{for~$\kappa_j = 0,\ldots,2^{\lambda_j}-2$,} \\
				[1 - 2^{-\lambda_j} , 1]
					\quad&\text{for~$\kappa_j = 2^{\lambda_j}-1$.}
			\end{cases}
\end{equation*}
Note that for fixed~$\lambda_j$ we have a decomposition
of the unit interval~\mbox{$[0,1]$}
into $2^{\lambda_j}$~disjoint intervals of length~$2^{-\lambda_j}$.
(This index system for subdomains
differs from the index system for subcubes in \secref{sec:Order}, 
which shall be no source of confusion.)
One-dimensional Haar wavelets~\mbox{$\psi_{\alpha_j} : [0,1] \rightarrow \R$}
are defined for~\mbox{$\alpha_j \in \N_0$}
(if~\mbox{$\alpha_j=0$}, we put~\mbox{$\lambda_j = -\infty$} and~\mbox{$\kappa_j=0$}),
\begin{equation*}
	\psi_{\alpha_j} %= \psi_{\lambda_j,\kappa_j}
		:=\begin{cases}
				\ind_{[0,1]}
					\quad&\text{if $\alpha_j = 0$
											(i.e.\ $\lambda_j = -\infty$ and $\kappa_j=0$),}\\
				2^{\lambda_j/2}
					\, (\ind_{I_{\lambda_j+1, 2\kappa_j+1}}
							- \ind_{I_{\lambda_j+1, 2\kappa_j}})
					\quad&\text{if $\alpha_j \geq 1$ (i.e.\ $\lambda_j \geq 0$).}
			\end{cases}
\end{equation*}
In~\mbox{$L_2([0,1]^d)$} we have the orthonormal basis
\mbox{$\{\psi_{\vecalpha}\}_{\vecalpha \in \N_0^d}$} with
\begin{equation*}
	\psi_{\vecalpha}(\vecx) := \prod_{j=1}^d \psi_{\alpha_j}(x_j) \,.
\end{equation*}
The volume of the support of~$\psi_{\vecalpha}$
is~\mbox{$2^{-|\veclambda|_{+}}$}
with~\mbox{$|\veclambda|_{+} := \sum_{j=1}^d \max\{0,\lambda_j\}$}.
The basis function~$\psi_{\vecalpha}$ only takes discrete
values~\mbox{$\{0,\pm 2^{|\veclambda|_{+}/2}\}$},
hence it is normalized indeed.
(Our definition differs from the usual definition of the Haar basis
where for each~$\alpha_j > 0$ the sign would be reversed.
However, our version is convenient in the context of monotone functions,
especially in the proof of \lemref{lem:monSmallWavelet}.)

We can write any monotone function~$f$ as the Haar wavelet decomposition
\begin{equation*}
	f \,=\, \sum_{\vecalpha \in \N_0^d} \tilde{f}(\vecalpha) \, \psi_{\vecalpha}
\end{equation*}
with the wavelet coefficients
\begin{equation*}
	\tilde{f}(\vecalpha)
		\,:=\, \langle \psi_{\vecalpha}, f \rangle
		\,=\, \expect \psi_{\vecalpha}(\vecX) \, f(\vecX) \,,
\end{equation*}
where~$\vecX$ is uniformly distributed on~\mbox{$[0,1]^d$}.
The algorithm shall use information from random samples
\mbox{$f(\vecX_1),\ldots,f(\vecX_n)$},
with~\mbox{$\vecX_i \stackrel{\text{iid}}{\sim} \Uniform [0,1]^d$},
in order to approximate the most important wavelet coefficients
via the standard Monte Carlo method,
\begin{equation} \label{eq:g(alpha)}
	\tilde{f}(\vecalpha)
		\,\approx\, \tilde{h}(\vecalpha)
		\,:=\, \frac{1}{n} \sum_{i=1}^n \psi_{\vecalpha}(\vecX_i) \, f(\vecX_i) \,.
\end{equation}
In particular, we choose a resolution~\mbox{$r \in \N$},
and a parameter~\mbox{$k \in \{1,\ldots,d\}$},
and only consider indices~\mbox{$\vecalpha \leftrightarrow (\veclambda,\veckappa)$}
with~\mbox{$\lambda_j < r$}
and~\mbox{$|\vecalpha|_0 := \#\{ j \mid \alpha_j > 0\} \leq k$}.
(The quantity $|\vecalpha|_0$ counts the number of \emph{active variables}
of a wavelet~$\psi_{\vecalpha}$.)
A naive linear algorithm would simply return a linear reconstruction,
\begin{equation} \label{eq:linAlg}
	h \,=\, A_{n,k,r}^{\omega}(f)
		\,:=\, \sum_{\substack{\vecalpha \in \N_0^d\\
						|\vecalpha|_0 \leq k \\
						\veclambda < r}}
				\tilde{h}(\vecalpha) \, \psi_{\vecalpha}
			 \,, \qquad \text{for $f \in F_{\mon}^d$\,.}
\end{equation}
This linear algorithm can already break the curse of dimensionality
but the~$\eps$-dependency of the required sample size is unfavourable,
see~\cite[Theorem~4.22]{Ku17} for a detailed analysis.
Instead, for the subclass of sign-valued monotone functions
\begin{equation*}
	F_{\mon\pm}^d
		:= \{f:[0,1]^d \rightarrow \{-1,+1\} \mid f \in F_{\mon}^d\} \,,
\end{equation*}
in the~$L_1$-approximation setting
it is natural to return a sign-valued approximation,
\begin{equation} \label{eq:AlgF+-}
	g \,=\, \hat{A}_{n,k,r}^{\omega}(f)
		\,:=\, \sgn h
		\,=\, \sgn \circ\,[A_{n,k,r}^{\omega}(f)]
%		:= \sgn\circ\Biggl(\sum_{\substack{\vecalpha \in \N_0^d\\
%											|\vecalpha|_0 \leq k \\
%											\veclambda < r}}
%					\tilde{h}(\vecalpha) \, \psi_{\vecalpha}
%					\Biggr)
	\,, \qquad \text{for $f \in F_{\mon\pm}^d$\,.}
\end{equation}
(Here and for the rest of this paper,
we put~$\sgn(0) := 1$ in order to avoid zero values.)
For general monotone functions \mbox{$f \in F_{\mon}^d$}
with function values in~$[-1,+1]$,
%the output of
the algorithm can be generalized to
\begin{equation} \label{eq:AlgF}
	\bar{A}_{n,k,r}^{\omega}(f)
		\,:=\, \frac{1}{2} \int_{-1}^1
					\hat{A}_{n,k,r}^{\omega} (f_t)
						\dint t \,,
	\qquad \text{where $f_t(\vecx) := \sgn(f(\vecx)-t)$ for~$t \in \R$.}
\end{equation}
Note that the function values~$f_t(\vecX_i)$ which are needed in the course
of evaluating~$\bar{A}_{n,k,r}$ can be directly derived from
function values~$f(\vecX_i)$, so we still use the same information
as within the simple linear algorithm~\eqref{eq:linAlg}.
(This trick would not be possible for algorithms
with an adaptive procedure for collecting information
on a sign-valued function.)
The idea for the generalized algorithm~$\bar{A}_{n,k,r}$
is based on the observation
\begin{equation} \label{eq:observation}
	f(\vecx) \,=\, \frac{1}{2} \int_{-1}^1 \sgn(f(\vecx)-t) \dint t \,,
		\qquad \text{for $f(\vecx) \in [-1,1]$.}
\end{equation}
The validity of this approach is summarized in the following Lemma.
\begin{lemma} \label{lem:FvsF+-}
	For the approximation of monotone functions with the methods
	defined in~\eqref{eq:AlgF+-} and \eqref{eq:AlgF} we have
	\begin{equation*}
		e(\hat{A}_{n,k,r},F_{\mon\pm}^d)
			\,=\, e(\bar{A}_{n,k,r},F_{\mon}^d) \,.
	\end{equation*}
\end{lemma}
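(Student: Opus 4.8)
The plan is to prove the two inequalities separately: the direction ``$\ge$'' is essentially trivial, while ``$\le$'' is a layer‑cake argument based on the identity~\eqref{eq:observation}. For ``$\ge$'' I would use that $F_{\mon\pm}^d \subseteq F_{\mon}^d$, so it suffices to check that the two methods \emph{coincide} on the smaller class. If $f \in F_{\mon\pm}^d$, i.e.\ $f(\vecx) \in \{-1,+1\}$ everywhere, then the slice satisfies $f_t(\vecx) = \sgn(f(\vecx)-t) = f(\vecx)$ for every $\vecx$ and every $t \in (-1,1]$; the only exceptional parameter is $t=-1$, where the convention $\sgn 0 = 1$ flips the sign on $\{f = -1\}$, but this single value of $t$ is irrelevant for the integral in~\eqref{eq:AlgF}. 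Hence $\bar A_{n,k,r}^{\omega}(f) = \tfrac12\int_{-1}^1 \hat A_{n,k,r}^{\omega}(f)\dint t = \hat A_{n,k,r}^{\omega}(f)$ pointwise, so the two errors agree on $F_{\mon\pm}^d$, and taking the supremum over the larger class on the right gives $e(\hat A_{n,k,r},F_{\mon\pm}^d) = e(\bar A_{n,k,r},F_{\mon\pm}^d) \le e(\bar A_{n,k,r},F_{\mon}^d)$.

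For ``$\le$'' I would fix $f \in F_{\mon}^d$ and first record that for each $t \in \R$ the function $f_t(\vecx) = \sgn(f(\vecx)-t)$ is sign‑valued and, since $\sgn$ is non‑decreasing and $f$ is monotone, again monotone, so $f_t \in F_{\mon\pm}^d$. Combining~\eqref{eq:observation} with the definition~\eqref{eq:AlgF} yields, pointwise in $\vecx$,
\[
	\bar A_{n,k,r}^{\omega}(f)(\vecx) - f(\vecx)
		= \frac12 \int_{-1}^1 \bigl[\hat A_{n,k,r}^{\omega}(f_t)(\vecx) - f_t(\vecx)\bigr]\dint t \,,
\]
and then the triangle inequality for the $t$‑integral followed by integration over $\vecx$ (using Tonelli on the non‑negative integrand) gives
\[
	\|\bar A_{n,k,r}^{\omega}(f) - f\|_{L_1}
		\,\le\, \frac12 \int_{-1}^1 \|\hat A_{n,k,r}^{\omega}(f_t) - f_t\|_{L_1}\dint t \,.
\]
Taking expectations, interchanging $\expect$ with $\int_{-1}^1$, bounding each integrand by $e(\hat A_{n,k,r},F_{\mon\pm}^d)$ (legitimate because $f_t \in F_{\mon\pm}^d$), and finally taking the supremum over $f \in F_{\mon}^d$ delivers $e(\bar A_{n,k,r},F_{\mon}^d) \le e(\hat A_{n,k,r},F_{\mon\pm}^d)$, which together with the first direction proves the claim.

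The calculations are routine, so the only point that genuinely requires care is \emph{measurability}: one must ensure that $(t,\vecx,\omega)\mapsto \hat A_{n,k,r}^{\omega}(f_t)(\vecx)$ is jointly measurable, so that Tonelli's theorem applies both to the $(t,\vecx)$‑integral and to the interchange of $\expect$ with $\int_{-1}^1$. This follows from the blanket measurability assumption on Monte Carlo methods made in the introduction, together with the fact that, for fixed $f$, the family $t\mapsto f_t$ is monotone (hence measurable) and the maps $N^{\omega},\phi^{\omega},\sgn$ are measurable; the already‑noted behaviour of $\sgn$ at $0$ and at the endpoints $t=\pm1$ only touches a $t$‑null set and is harmless. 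I do not expect any serious obstacle beyond making this bookkeeping precise.
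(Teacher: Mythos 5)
Your proof is correct and follows essentially the same route as the paper: the ``$\ge$'' direction from $f_t=f$ on $F_{\mon\pm}^d$ together with $F_{\mon\pm}^d\subset F_{\mon}^d$, and the ``$\le$'' direction via the identity~\eqref{eq:observation}, the triangle inequality, and Fubini/Tonelli to bound $e(\bar A_{n,k,r},f)$ by the worst-case error over $F_{\mon\pm}^d$. Your added remarks on the null set $\{t=-1\}$ and on measurability are finer-grained than the paper's treatment but do not change the argument.
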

\begin{proof}
	Since for sign-valued functions $f \in F_{\mon\pm}^d$
	we have $f_t = f$ for~$t \in (-1,1]$,
	trivially~$\hat{A}_{n,k,r}(f) = \bar{A}_{n,k,r}(f)$,
	and from~$F_{\mon\pm}^d \subset F_{\mon}^d$
	we conclude the inequality
	\mbox{$e(\hat{A}_{n,k,r},F_{\mon\pm}^d)
					\leq e(\bar{A}_{n,k,r},F_{\mon}^d)$}.
	
	Reversely,
	for~$f \in F_{\mon}^d$,
	using the definition of~$\bar{A}_{n,k,r}$ in \eqref{eq:AlgF}
	and the observation~\eqref{eq:observation},
	via the triangle-inequality and Fubini's theorem we have
	\begin{align*}
		e(\bar{A}_{n,k,r},f)
			&\,=\, \expect \left\|f - \bar{A}_{n,k,r}^{\omega}(f)\right\|_{L_1} \\
		[\eqref{eq:AlgF},\eqref{eq:observation}]\qquad
%			\stackrel{\eqref{eq:observation}}{\,=\,}
			&\,=\,
				\frac{1}{2} \expect \left\| \int_{-1}^{1}
																(f_t - \hat{A}_{n,k,r}^{\omega}(f_t))
															\dint t
										\right\|_{L_1} \\
		[\text{$\Delta$-ineq., Fubini}]\qquad
%			&\stackrel{\text{$\Delta$-ineq.}}{\,\leq\,}
			&\,\leq\,
					\frac{1}{2}
						\int_{-1}^1
								\expect \bigl\|f_t - \hat{A}_{n,k,r}^{\omega}(f_t)\bigr\|
							\dint t
			\,\leq\, e(\hat{A}_{n,k,r},F_{\mon\pm}^d) \,.
	\end{align*}
	This implies
	$e(\bar{A}_{n,k,r},F_{\mon}^d) \leq e(\hat{A}_{n,k,r},F_{\mon\pm}^d)$,
	thus finishing the proof.
\end{proof}

We continue with the error analysis of the given algorithm,
where by virtue of the above lemma
we may restrict to the approximation of sign-valued monotone functions
via~$\hat{A}_{n,k,r}$.
For details on the implementation of~$\bar{A}_{n,k,r}$,
see \remref{rem:monoMCUBphicost}.

A key result for the error analysis is the following fact
about those Haar wavelet coefficients which are dropped by the algorithm,
compare Bshouty and Tamon~\cite[Section~4]{BT96} for the Boolean setting.
\begin{lemma}\label{lem:monSmallWavelet}
	For any monotone function~$f \in F_{\mon}^d$ we have
	\begin{equation*}
		\sum_{\substack{\vecalpha \in \N_0^d\\
											|\vecalpha|_0 > k \\
											\veclambda < r}}
				\tilde{f}(\vecalpha)^2
			\,\leq\, \frac{\sqrt{d \, r}}{k+1} \,.
	\end{equation*}
\end{lemma}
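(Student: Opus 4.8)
The plan is to bound the tail of the Haar coefficient sum by relating it to the total "fluctuation" of $f$ along coordinate directions. First I would reduce to a one-variable estimate: for a fixed coordinate $j$ and a fixed cuboid position in the other $d-1$ coordinates, the wavelet coefficients $\tilde f(\vecalpha)$ with $\alpha_j \geq 1$ measure oscillation of $f$ in the $j$-th variable. The key point is that, because $f$ is monotone in each variable separately and bounded in $[-1,1]$, the total variation of $f$ in the $j$-th direction is at most $2$ (it is monotone!), so $\sum$ of squares of one-dimensional Haar coefficients in direction $j$, summed over resolutions $\lambda_j < r$ and shifts $\kappa_j$, is controlled. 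More precisely, for a monotone bounded function $g$ of one variable, $\sum_{\lambda < r}\sum_{\kappa} \tilde g(\lambda,\kappa)^2$ grows at most linearly in $r$ with a small constant (each level contributes at most a fixed amount because the $L_2$-energy of the level-$\lambda$ Haar projection of a monotone function with total variation $V$ is $\lesssim V^2 2^{-\lambda}$, but crucially the sum telescopes to something $\lesssim V^2$, or after the $r$-truncation, to something linear in $r$ via a cruder per-level bound $\lesssim V \cdot \mathrm{osc}$). This is where the paper's nonstandard sign convention for $\psi_{\alpha_j}$ is exploited, as the parenthetical remark hints.

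Second, I would lift this to the $d$-dimensional setting. Writing $\|\cdot\|_2^2$ for the squared $\ell_2$-norm of the coefficient array, Parseval gives $\sum_{\vecalpha} \tilde f(\vecalpha)^2 = \|f\|_{L_2}^2 \leq 1$. I want to show that the restricted sum over $\{|\vecalpha|_0 > k,\ \veclambda < r\}$ is small. The natural device is a weighting/averaging argument: define $S_m := \sum_{|\vecalpha|_0 = m,\ \veclambda < r} \tilde f(\vecalpha)^2$ and show that $\sum_{m=1}^d m\, S_m \leq \sqrt{d r}$ (or a similar first-moment bound), using the one-dimensional estimate applied coordinatewise — each "active variable" of a wavelet contributes a factor controlled by the one-dimensional variation bound, and summing over which variables are active produces the $\sqrt{dr}$ via Cauchy–Schwarz over the $d$ coordinates. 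Then, since $\sum_{m > k} S_m \leq \frac{1}{k+1}\sum_{m>k}(m)\,S_m \leq \frac{1}{k+1}\sum_{m\geq 1} m\, S_m$, the claimed bound $\frac{\sqrt{dr}}{k+1}$ follows by a Markov-type inequality on the weighted distribution of active-variable counts.

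The main obstacle I anticipate is establishing the correct per-coordinate energy bound — showing that the total Haar energy of a monotone bounded univariate function, summed over levels $\lambda = 0,\ldots,r-1$, is $O(\sqrt{r})$ rather than $O(r)$ (which is what a naive per-level bound gives). Getting $\sqrt{r}$ rather than $r$ is essential for the final $\sqrt{dr}$ and presumably comes from combining an $\ell_1$-type bound on coefficients (monotonicity forces the level-$\lambda$ coefficients to have controlled signs and to telescope) with an $\ell_\infty$ or $\ell_2$ bound, interpolated via Cauchy–Schwarz; the monotonicity in the chosen sign convention is exactly what makes the coefficients at a given level add up coherently rather than cancel. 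I would carry out the univariate lemma first in full detail, then the coordinatewise tensorization, then the Markov step, which is routine.
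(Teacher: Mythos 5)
You have two pieces of the argument right: the Markov endgame (reduce the tail sum to the first moment \(\sum_{\veclambda<r}|\vecalpha|_0\,\tilde f^2(\vecalpha)\le\sqrt{dr}\), then divide by \(k+1\)) and the role of the sign convention (monotonicity in variable \(j\) forces the single--variable coefficients \(\tilde f(\alpha\,\vece_j)\) to be nonnegative). But the route you propose to the first-moment bound has a genuine gap, and the obstacle you identify as the ``main one'' is not the one that actually arises.

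You are hoping to prove a per-coordinate, essentially univariate energy estimate of order \(\sqrt r\) and then tensorize. The paper does not prove any such per-coordinate bound, and in fact it would be the wrong target: the sum \(\sum_{\veclambda<r,\ \alpha_j\ge1}\tilde f^2(\vecalpha)\) is not a univariate quantity (it ranges over all \(d\)-variate coefficients active in coordinate \(j\)), and the bounds over different \(j\) cannot be taken independently -- they satisfy a joint constraint. The real mechanism is a comparison between coefficients of \emph{different} active-variable counts: for a slice-averaged function
\[
f_{\alpha j}(\vecx)
	= 2^{\lambda/2}\int_0^1 \psi_\alpha(z_j)\,f(\vecz)\big|_{z_{j'}=x_{j'},\,j'\ne j}\dint z_j
	\quad\text{on } x_j\in I_\alpha,
\]
monotonicity and boundedness give the pointwise inequality \(0\le f_{\alpha j}\le 1\), hence \(\|f_{\alpha j}\|_{L_1}\ge\|f_{\alpha j}\|_{L_2}^2\). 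Since \(\tilde f(\alpha\,\vece_j)=2^{\lambda/2}\|f_{\alpha j}\|_{L_1}\), combining with Parseval applied to \(f_{\alpha j}\) (whose nonzero Haar coefficients are proportional to those of \(f\) with \(\alpha_j''=\alpha\)) yields the key self-referential inequality
\[
\tilde f(\alpha\,\vece_j)\;\ge\;2^{\lambda/2}\sum_{\alpha_j''=\alpha}\tilde f^2(\vecalpha'')\,.
\]
That is: one \emph{linear} quantity in the single-variable coefficients controls a \emph{quadratic} sum over all higher-dimensional coefficients sharing that coordinate. Feeding this into \(\|f\|_{L_2}^2=1\) with a Cauchy--Schwarz over \(\kappa\) at each level, one obtains \(\sum_j\sum_{\lambda<r}\bigl(\sum_{\lambda_j=\lambda}\tilde f^2(\vecalpha)\bigr)^2\le 1\), and the \(\sqrt{dr}\) then comes in a single stroke from \(\|\vecv\|_1\le\sqrt{dr}\,\|\vecv\|_2\) applied to the \(dr\)-dimensional vector indexed by \((j,\lambda)\). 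So the ``\(\sqrt r\) rather than \(r\)'' worry you raise is a red herring; what is missing from your outline is the idea of comparing linear and quadratic functionals of the coefficients via the slice functions \(f_{\alpha j}\), which is where monotonicity does the real work.
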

\begin{proof}
	Within the first step, we consider special wavelet
	coefficients~\mbox{$\tilde{f}(\alpha \, \vece_j)$}
	that measure the average growth of~$f$
	along the $j$-th coordinate within
	the interval~\mbox{$I_{\alpha}$},
	where~$\vece_j$ is the $j$-th vector of the standard basis in~$\R^d$.
	We will frequently use the alternative
	indexing~\mbox{$I_{\lambda,\kappa}$}
	with~\mbox{$\alpha = 2^{\lambda} + \kappa \in \N$},
	where~\mbox{$\lambda \in \N_0$} and~\mbox{$\kappa = 0,\ldots,2^{\lambda}-1$}.
	We define the function
	\begin{align*}
%		f_{-\alpha j}(\vecx)
%			&:= \begin{cases}
%						0
%							\quad&\text{if $x_j \notin I_{\alpha}$,}\\
%						2^{\lambda+1} \,
%							\int_{I_{(\lambda+1,2\kappa)}}
%								f(\vecz)\Big|_{\substack{z_{j'} = x_{j'}\\
%																		\text{for $j' \not= j$}}}
%									\dint z_j
%							\quad&\text{if $x_j \in I_{\alpha}$,}
%					\end{cases} \\
		f_{\alpha j}(\vecx)
			&:= \begin{cases}
						0
							\quad&\text{for $x_j \notin I_{\alpha}$,}\\
						2^{\lambda/2} \,
							\int_0^1 \psi_{\alpha}(z_j) \cdot 
										f(\vecz)\Big|_{\substack{z_{j'} = x_{j'}\\
																						\text{for $j' \not= j$}}}
								\dint z_j
							\quad&\text{for $x_j \in I_{\alpha}$.}
					\end{cases}
	\end{align*}
	Due to the monotonicity of~$f$
	we have \mbox{$f_{\alpha j} \geq 0$},
	and from the boundedness of~$f$ we have \mbox{$f_{\alpha j} \leq 1$}.
	Using this and Parseval's equation, we obtain
	\begin{align*}
		\tilde{f}(\alpha \, \vece_j)
			\,=\, \langle \psi_{\alpha \, \vece_j} , f \rangle
			\,=\, 2^{\lambda/2} \, \left\| f_{\alpha j} \right\|_{L_1}
			\,\geq\, 2^{\lambda/2} \,
					\left\| f_{\alpha j} \right\|_{L_2}^2
			\,=\, 2^{\lambda/2} \, \sum_{\vecalpha' \in \N_0^d}
						\left\langle \psi_{\vecalpha'},
													f_{\alpha j}
						\right\rangle^2 \,.
	\end{align*}
	Since the function~$f_{\alpha j}$
	is constant in~$x_j$ on~$I_{\alpha}$ and vanishes outside,
	we only need to consider summands
	with coarser resolution~\mbox{$\lambda_j' < \lambda$}
	in that coordinate,
	and where the support of~$\psi_{\vecalpha}$
	contains the support of~$f_{\alpha j}$.
	That is the case for
	\mbox{$\kappa_j' = \lfloor 2^{\lambda_j' - \lambda} \kappa \rfloor$}
	with~\mbox{$\lambda_j' = -\infty,0,\ldots,\lambda-1$}.
	For such indices $\vecalpha' \leftrightarrow (\veckappa',\veclambda')$
	we have
	\begin{equation*}
		\left\langle \psi_{\vecalpha'}, f_{\alpha j} \right\rangle^2
			\,=\, 2^{\max\{0,\lambda_j'\} - \lambda} \,
							\left\langle \psi_{\vecalpha''},f
							\right\rangle^2
			\,=\, 2^{\max\{0,\lambda_j'\} - \lambda} \, \tilde{f}^2(\vecalpha'') \,,
	\end{equation*}
	where~\mbox{$\alpha''_{j'} = \alpha'_{j'}$} for~\mbox{$j' \not= j$},
	and~\mbox{$\alpha''_j = \alpha$}. Hence we obtain
	\begin{equation} \label{eq:f(aej)>}
		\tilde{f}(\alpha \, \vece_j)
			\,\geq\, 2^{\lambda/2}
					\, \underbrace{\left(2^{-\lambda}
															+ \sum_{l = 0}^{\lambda-1} 2^{l-\lambda}
													\right)
												}_{= 1}
					\, \sum_{\substack{\vecalpha'' \in \N_0^d\\
														\alpha_j'' = \alpha}}
								\tilde{f}^2(\vecalpha'') \,.
	\end{equation}
	
	Based on this relation between the wavelet coefficients, we can estimate
	\begin{align*}
		1 \,=\, \|f\|_{L_2}^2
			&\,=\,
				\sum_{\vecalpha \in \N_0^d} \tilde{f}^2(\vecalpha)
			\,\geq\,
				\sum_{j=1}^d
					\sum_{\lambda = 0}^{r-1}
						\sum_{\kappa = 0}^{2^{\lambda}-1}
							\tilde{f}^2((2^{\lambda}+\kappa) \, \vece_j)\\
%		[\text{\mbox{$|\vecv|_1^2 \leq \sqrt{m} \, |\vecv|_2^2$}
%			for~\mbox{$\vecv \in \R^m$}}]\qquad
			&\,\geq\,
				\sum_{j=1}^d \sum_{\lambda = 0}^{r-1} 2^{-\lambda}
					\Biggl(\sum_{\kappa = 0}^{2^{\lambda}-1}
											\tilde{f}((2^{\lambda}+\kappa) \, \vece_j)
					\Biggr)^2\\
		[\eqref{eq:f(aej)>}]\qquad
			&\,\geq\,
%			\stackrel{\eqref{eq:f(aej)>}}{\,\geq\,}
				\sum_{j=1}^d \sum_{\lambda = 0}^{r-1}
					\Biggl(\sum_{\substack{\vecalpha \in \N_0^d\\
														\lambda_j = \lambda}}
									\tilde{f}^2(\vecalpha)
					\Biggr)^2 \,.
	\end{align*}
	Taking the square root, and using the norm estimate
	\mbox{$\|\vecv\|_1 \leq \sqrt{m} \, \|\vecv\|_2$}
	for~\mbox{$\vecv \in \R^m$},
	here with~\mbox{$m = dr$}, we end up with
	\begin{equation*}
		1 \,\geq\, \frac{1}{\sqrt{d r}} \,
					\sum_{j=1}^d \sum_{\lambda = 0}^{r-1}
									\sum_{\substack{\vecalpha \in \N_0^d\\
														\lambda_j = \lambda}}
										\tilde{f}^2(\vecalpha)
			\,\geq\, \frac{1}{\sqrt{d r}} \,
					\sum_{\substack{\vecalpha \in \N_0^d\\
														\veclambda < r}}
						|\vecalpha|_0 \, \tilde{f}^2(\vecalpha)
			\,\geq\, \frac{k+1}{\sqrt{d r}} \,
					\sum_{\substack{\vecalpha \in \N_0^d\\
														|\vecalpha|_0 > k \\
														\veclambda < r \\}}
										\tilde{f}^2(\vecalpha) \,.
	\end{equation*}
	This proves the lemma.
\end{proof}

By virtue of the above lemma
we obtain the following error and complexity bound,
compare Bshouty and Tamon~\cite[Theorem~5.1]{BT96}
for the Boolean setting.
(Their error criterion is slightly different,
namely, it is the $L_1$-error at prescribed confidence level
rather than the expected $L_1$-distance.)

\begin{theorem} \label{thm:monoUBs}
	For the algorithm~\mbox{$\bar{A}_{n,k,r}
		= (\bar{A}_{n,k,r}^{\omega})_{\omega \in \Omega}$}
	as defined in \eqref{eq:AlgF}
	we have the error bound
	\begin{equation*}
		e(\bar{A}_{n,k,r},F_{\mon}^d)
			\,\leq\, 5 \, \frac{d}{2^{r}}
				+ 4\,\frac{\sqrt{d r}}{k+1}
								+ 4\,\frac{\exp[k(1+\log \frac{d}{k} + (\log 2) \, r)]}{n}
								 \,.
	\end{equation*}
	Given~\mbox{$0 < \eps < 1$},
	the $\eps$-complexity for the Monte Carlo approximation
	of monotone functions is bounded by
	\begin{align*}
		n^{\ran}(\eps,F_{\mon}^d)
			\,\leq\, \min\Biggl\{&
								\exp\left[C \, %\min\left\{
																	\frac{\sqrt{d}}{\eps} %,\,d
																%\right\}
														\, \left( 1 + \log \frac{d}{\eps}\right)^{3/2}
													\right],\,
								\exp\left[d \, \log \frac{d}{2 \eps}\right]
							\Biggr\} \,,
	\end{align*}
	with some numerical constant~\mbox{$C > 0$}.
	In particular, the curse of dimensionality does \emph{not} hold
	for the randomized $L_1$-approximation of monotone functions.
\end{theorem}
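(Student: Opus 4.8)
The plan is to bound the error of $\hat{A}_{n,k,r}$ on the class $F_{\mon\pm}^d$ of sign-valued monotone functions, and then invoke \lemref{lem:FvsF+-} to transfer the bound to $\bar{A}_{n,k,r}$ on all of $F_{\mon}^d$. For a sign-valued $f$ the pointwise error $|f(\vecx)-g(\vecx)|$ is either $0$ or $2$, so $\|f-g\|_{L_1} \leq 2\,\Vol\{\vecx : g(\vecx) \neq f(\vecx)\}$, and whenever $g(\vecx) = \sgn h(\vecx) \neq f(\vecx)$ we must have $|h(\vecx)-f(\vecx)| \geq 1$; hence $\|f-g\|_{L_1} \leq 2\,\|h-f\|_{L_2}^2$ by Chebyshev/Markov applied to the density of $|h-f|$. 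Thus it suffices to control $\expect\|h-f\|_{L_2}^2$ where $h = A_{n,k,r}^{\omega}(f) = \sum_{|\vecalpha|_0 \le k,\ \veclambda < r} \tilde{h}(\vecalpha)\,\psi_{\vecalpha}$ is the linear reconstruction from the empirical wavelet coefficients in~\eqref{eq:g(alpha)}.

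By Parseval, $\|h-f\|_{L_2}^2$ splits into a \emph{bias} part over the discarded indices and a \emph{variance} part over the kept indices:
\begin{equation*}
	\|h-f\|_{L_2}^2
		\,=\, \sum_{\substack{|\vecalpha|_0 > k \text{ or } \lambda_j \ge r \text{ some } j}}
					\tilde{f}(\vecalpha)^2
			\;+\; \sum_{\substack{|\vecalpha|_0 \le k\\ \veclambda < r}}
					\bigl(\tilde{h}(\vecalpha) - \tilde{f}(\vecalpha)\bigr)^2 \,.
\end{equation*}
The bias over high-degree indices $|\vecalpha|_0 > k$ (with $\veclambda < r$) is at most $\sqrt{dr}/(k+1)$ by \lemref{lem:monSmallWavelet}; the bias over indices with some $\lambda_j \geq r$ needs a separate but routine estimate — using that $f$ is monotone and bounded, the "tail mass" of wavelet coefficients at resolution level $\geq r$ in a fixed coordinate is $O(2^{-r})$, and summing over $d$ coordinates gives $O(d\,2^{-r})$, which I expect to account for the $5d/2^r$ term. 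For the variance part I take expectation: since $\tilde{h}(\vecalpha)$ is an average of $n$ i.i.d.\ copies of $\psi_{\vecalpha}(\vecX)f(\vecX)$, a bounded random variable with $|\psi_{\vecalpha}(\vecX)f(\vecX)| \le 2^{|\veclambda|_+/2} \le 2^{kr/2}$ (using $|\vecalpha|_0 \le k$ and $\lambda_j < r$), we get $\expect(\tilde{h}(\vecalpha)-\tilde{f}(\vecalpha))^2 = \Var(\cdots)/n \le 2^{|\veclambda|_+}/n$. Summing over the kept indices, the number of admissible $(\veclambda,\veckappa)$ with $|\vecalpha|_0 \le k$ is at most $\binom{d}{k}(2^r)^k \le (ed/k)^k 2^{rk} = \exp[k(1 + \log(d/k) + (\log 2)r)]$, and each contributes at most $2^{kr}/n$ — actually more carefully one sums $2^{|\veclambda|_+}$ over $\veckappa$ to telescope one factor, yielding the stated third term $4\exp[k(1+\log\frac{d}{k}+(\log 2)r)]/n$ after absorbing constants and taking the square root bound $2\|h-f\|_{L_2}^2$.

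This gives the displayed error bound. For the complexity bound, given $\eps \in (0,1)$ I balance the three terms: choose $r = \lceil \log_2(30d/\eps)\rceil$ so the first term is $\le \eps/3$; choose $k = \lceil 12\sqrt{dr}/\eps \rceil$ so the second term is $\le \eps/3$; then solving $4\exp[k(1+\log(d/k)+(\log 2)r)]/n \le \eps/3$ for $n$ gives $n \le \exp[C'\,k\,r] \le \exp[C\,\frac{\sqrt{dr}}{\eps}\,r] = \exp[C\,\frac{\sqrt{d}}{\eps}\,r^{3/2}]$, and since $r = O(1 + \log(d/\eps))$ this is the first bound in the theorem. The second bound, $\exp[d\log(d/(2\eps))]$, is not from this algorithm at all but is the deterministic algorithm from \thmref{thm:MonAppOrderConv}: taking $n^{\deter}(\eps) \le d^d\eps^{-d} = \exp[d\log(d/\eps)]$, or more sharply the $m^d$-subcube construction with $m = \lceil d/(2\eps)\rceil$ giving $\exp[d\log(d/(2\eps))]$ — one checks the resulting deterministic $L_1$-error is $\le d/m \le 2\eps$, wait, $\le \eps$ after adjusting constants. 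So $n^{\ran} \le n^{\deter}$ is bounded by the second expression, and taking the minimum of the two strategies completes the proof; the final sentence about the curse follows since $\sqrt{d}$ in the exponent (for fixed $\eps$) grows subexponentially in $d$.

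The main obstacle I anticipate is the bias estimate for the resolution tail (the indices with some $\lambda_j \ge r$): \lemref{lem:monSmallWavelet} only controls the high-\emph{degree} coefficients at resolutions below $r$, so one needs a genuinely separate monotonicity argument — essentially, for each fixed coordinate $j$, the sum of $\tilde{f}(\vecalpha)^2$ over $\lambda_j \ge r$ is controlled by the total variation / oscillation of $f$ in that coordinate, which is $\le 2$ (since $f$ ranges in $[-1,1]$), discounted by the $2^{-r}$ scale factor of fine wavelets; making the constant $5$ come out requires care in not double-counting indices that are simultaneously high-degree and high-resolution. The variance bound is routine but one must be slightly careful that the crude bound $2^{|\veclambda|_+}/n$ summed over admissible indices telescopes correctly rather than blows up by an extra $2^{rk}$ factor.
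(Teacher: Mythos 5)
Your decomposition is correct and uses the same two key lemmas (\lemref{lem:FvsF+-} and \lemref{lem:monSmallWavelet}) plus the same diagonal argument, but it organizes the error split differently from the paper, and the difference is worth noting. The paper inserts the intermediary $\sgn f_r$ and writes $\|f-g\|_{L_1}\le\|f-\sgn f_r\|_{L_1}+\|\sgn f_r-g\|_{L_1}$, then applies the $L_1\le 2L_2^2$ trick to the second summand and a further triangle inequality in $L_2$, yielding constants $5,4,4$. You instead apply the $L_1\le 2L_2^2$ observation in one shot, $\|f-g\|_{L_1}\le 2\|h-f\|_{L_2}^2$, and split $\|h-f\|_{L_2}^2$ by Parseval into the resolution tail (some $\lambda_j\ge r$), the high-degree bias ($|\vecalpha|_0>k$, $\veclambda<r$), and the variance over kept indices. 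This route is in fact cleaner: it avoids the $\sgn f_r$ detour entirely and, carried through, gives the sharper bound $2\,d/2^r + 2\sqrt{dr}/(k+1) + 2(\#A)/n$, i.e.\ better constants than the stated $5,4,4$. You should claim that, rather than reverse-engineering $5d/2^r$.

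Two points you flagged as worries are both fine. First, the resolution tail: you do not need a separate per-coordinate total-variation argument. Since $f_r$ is the subcubewise average of the sign-valued $f$, one has $\int_{C}(f-f_r)^2 = \Vol(C)\,(1-f_r^2)$ on each subcube $C=C_{r\ones,\veckappa}$, which vanishes on the (majority of) subcubes where $f$ is constant and is at most $\Vol(C)$ on the at most $d\,2^{r(d-1)}$ subcubes in a jump diagonal; hence $\|f-f_r\|_{L_2}^2\le d/2^r$ by exactly the diagonal count the paper uses for $\|\sgn f_r-f_r\|_{L_2}^2$. Second, your "telescoping" of the variance is valid ($\sum_{\veckappa}\Var\le 2^{|\veclambda|_+}$ at each level $\veclambda$, which equals the number of $\veckappa$'s, so summing over levels gives $\#A$), but the paper's computation in \eqref{eq:waveletEst} is more direct: $\Var[\psi_{\vecalpha}(\vecX)f(\vecX)]\le\expect[\psi_{\vecalpha}^2 f^2]\le\expect[\psi_{\vecalpha}^2]=1$ for each coefficient individually, because the small support of $\psi_{\vecalpha}$ exactly compensates the large values on it. Stating the per-coefficient bound $\le 1/n$ removes any ambiguity. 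The complexity derivation matches the paper's; for the second (deterministic) branch you are right to invoke the algorithm from \thmref{thm:MonAppOrderConv} with error $\le d/m$, and the factor-of-two uncertainty you noticed in matching $\exp[d\log(d/(2\eps))]$ is a cosmetic looseness already present in the paper's own statement of that bound, not a gap introduced by your argument.
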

\begin{proof}
	Thanks to \lemref{lem:FvsF+-}, we may restrict
	to the analysis of the algorithm~$\hat{A}_{n,k,r}$ for sign-valued
	functions~$f \in F_{\mon\pm}^d$.
	
	Since we only take certain wavelet coefficients
	until a resolution~$r$ into account,
	the reconstruction~\eqref{eq:AlgF+-}
	will be a function which is constant on each
	of~$2^{rd}$ subcubes~$C_{r \ones,\veckappa}$
	where~\mbox{$\ones = (1,\ldots,1)$}
	and \mbox{$\veckappa \in \{0,\ldots,2^r-1\}^d$}.
	The algorithm can be seen as actually approximating
	\begin{equation*}
		\sgn f_r \,,
		\qquad \text{where}\quad
		f_r := \sum_{\substack{\vecalpha \in \N_0^d\\
																			\veclambda < r}}
												\tilde{f}(\vecalpha) \, \psi_{\vecalpha} \,.
	\end{equation*}
	Since on the one hand,
	the Haar wavelets are constant on each of these~$2^{rd}$~subcubes,
	and on the other hand,
	we have $2^{rd}$~wavelets up to this resolution,
	the function~$f_r$ takes on each of the subcubes
	the average function value of~$f$ on that subcube,
	which is between~$-1$ and $+1$.
	The function $\sgn f_r$ takes the subcubewise predominant value of~$f$,
	which is either $-1$ or~$+1$.
	That is,
	for~\mbox{$\vecX,\vecX' \sim \Uniform C_{r \ones,\veckappa}$}
	we have
	\begin{equation} \label{eq:frsubcube}
		\expect|f(\vecX) - \sgn f_r(\vecX)|
			\,=\, \expect|f(\vecX) - \median' f(\vecX')|
%			&\leq \frac{1}{2} \,
%					\left[\sup_{\vecx \in C_{r \ones,\veckappa}} f(\vecx)
%								-\inf_{\vecx \in C_{r \ones,\veckappa}} f(\vecx)
%					\right] \\
			\,\leq\, \ind[\text{$f$ not const.\ on~$C_{r\ones,\veckappa}$}] \,,
	\end{equation}
	and for~$\vecx \in C_{r\ones,\veckappa}$ we can estimate
	\begin{equation} \label{eq:sgnfrsubcube}
		|\sgn(f_r(\vecx)) - f_r(\vecx)|
			\,\leq\, \ind[\text{$f$ not const.\ on~$C_{r\ones,\veckappa}$}] \,.
%		\qquad \text{for~$\vecx \in C_{r\ones,\veckappa}$.}
	\end{equation}
	Similarly to the upper bound part
	within the proof of \thmref{thm:MonAppOrderConv},
	we group the subcubes into diagonals.
%	where the upper corner of one subcube touches the lower corner of the next
%	subcube.
%	Each diagonal can be uniquely represented
%	by an index~$\veckappa$ with at least one $0$-entry,
%	which thus belongs to the lowest subcube~$C_{r\ones,\veckappa}$
%	of that diagonal.
	By monotonicity,
	there is at most one subcube within each diagonal
	where the sign-valued function~$f$ jumps from~$-1$ to~$+1$,
	hence~\eqref{eq:frsubcube} and~\eqref{eq:sgnfrsubcube} are non-zero
	but bounded by~$1$.
	Now that there are
	\mbox{$2^{rd} - (2^r-1)^d
					\leq d \cdot 2^{r (d-1)}$}~%
	diagonals,
	and the volume of each subcube is~$2^{-rd}$,
	we obtain
	\begin{equation} \label{eq:|f-fr|}
			\|f - \sgn f_r\|_{L_1} \,\leq\, \frac{d \cdot 2^{r(d-1)}}{2^{rd}}
												\,=\, \frac{d}{2^r} \,,%\\
			\qquad\text{as well as}\qquad
			\|\sgn f_r - f_r\|_{L_2}^2 \,\leq\, \frac{d}{2^r} \,.
	\end{equation}
	
	Surprisingly, the fact that the wavelet basis functions have a small
	support, actually helps to keep the error for estimating the wavelet
	coefficients small. Exploiting independence of the sample points
	and unbiasedness of the standard Monte Carlo wavelet coefficient 
	estimator~\eqref{eq:g(alpha)},
	for~\mbox{$(\veclambda,\veckappa)
								\leftrightarrow \vecalpha \in \N_0^d$}
	we have
	\begin{align}
		\expect [\tilde{h}(\vecalpha)-\tilde{f}(\vecalpha)]^2
			&\,=\, \expect \left[\frac{1}{n}
												\sum_{i=1}^n \psi_{\vecalpha}(\vecX_i) \, f(\vecX_i)
																			- \tilde{f}(\vecalpha)
								\right]^2
					\nonumber\\
		\text{[$\vecX_i$ i.i.d.; unbiasedness]}\qquad
			&\,=\, \frac{1}{n} \expect\left[\psi_{\vecalpha}(\vecX_1) \, f(\vecX_1)
																	- \tilde{f}(\vecalpha)\right]^2 
					\nonumber\\
			&\,\leq\, \frac{1}{n}
								\expect \left[\psi_{\vecalpha}(\vecX_1) \, f(\vecX_1)\right]^2
					\nonumber\\
			&\,=\, \frac{1}{n}
						\, \underbrace{\P\{ \vecX_1 \in C_{\vecalpha} \}
													}_{= 2^{-|\veclambda|_+}}
						\, \expect\bigl[\underbrace{(\psi_{\vecalpha}(\vecX_1)
																					\, f(\vecX_1))^2
																				}_{= 2^{|\veclambda|_+}}
												\mid \vecX_1 \in C_{\vecalpha} 
											\bigr]
					\nonumber\\
			&\,=\, \frac{1}{n} \,.
					\label{eq:waveletEst}
	\end{align}
	
	This estimate on the quality of wavelet coefficient approximation
	can be used for estimating $L_2$-errors.
	Regarding the approximation~$g = \hat{A}^{\omega}_{n,k,r}(f) = \sgn h$
	as defined in~\eqref{eq:AlgF+-},
	from the observation
	\begin{equation*}
		\sgn (f_r(\vecx)) \,\not=\, g(\vecx) \,=\, \sgn(h(\vecx))
		\quad\Rightarrow\quad
		(\sgn (f_r(\vecx)) - h(\vecx))^2 \,\geq\, 1
	\end{equation*}
	we conclude
	\begin{equation} \label{eq:L_1<->L_2}
			\|\sgn f_r - g \|_{L_1}
				\,\leq\, 2\|\sgn f_r - h\|_{L_2}^2
				%\leq (\|\sgn f_r - f_r\|_{L_2} + \|f_r - h\|_{L_2})^2 \\
				\,\leq\, 4\, \|\sgn f_r - f_r\|_{L_2}^2 + 4\,\|f_r - h\|_{L_2}^2 \,.
	\end{equation}
	
	Then, combining previous estimates,
%	by \eqref{eq:|f-fr|}, \lemref{lem:monSmallWavelet}, and \eqref{eq:waveletEst},
	the expected distance between the input~$f$
	and the approximate reconstruction~$g = \hat{A}^{\omega}_{n,k,r}(f) = \sgn h$
	from~\eqref{eq:AlgF+-}
	can be bounded as follows,
	\begin{align}
		\expect \|f - g\|_{L_1}
			&\,\leq\, \|f - \sgn f_r\|_{L_1} + \expect\|\sgn f_r - g\|_{L_1}
					\nonumber\\
		[\eqref{eq:L_1<->L_2}]\qquad
			&\,\leq\,
					\|f - \sgn f_r\|_{L_1}
						+ 4\, \|\sgn f_r - f_r\|_{L_2}^2 + 4 \expect\|f_r - h\|_{L_2}^2
					\nonumber\\
		[\text{\eqref{eq:|f-fr|}, Parseval}]\qquad
			&\,\leq\,
				5\,\frac{d}{2^r}
					+ 4\,\biggl(\sum_{\substack{\vecalpha \in \N_0^d\\
																	|\vecalpha|_0 > k \\
																	\veclambda < r}}
										\tilde{f}(\vecalpha)^2
									+ \sum_{\substack{\vecalpha \in \N_0^d\\
																		|\vecalpha|_0 \leq k \\
																		\veclambda < r}}
											\expect [\tilde{f}(\vecalpha)
																- \tilde{h}(\vecalpha)]^2
						\biggr)
				\nonumber\\
		\text{[\lemref{lem:monSmallWavelet}; \eqref{eq:waveletEst}]}\qquad
			&\,\leq\, 5\,\frac{d}{2^r}
									+ 4\, \frac{\sqrt{d \, r}}{k+1}
									+ 4\,\frac{\# A}{n}\,,
					\label{eq:error3summands}
	\end{align}
	where~$A$ is the index set corresponding to the wavelet coefficients
	that are computed,
	\begin{equation*}
		A := \{\vecalpha \in \N_0^d
						\mid |\vecalpha|_0 \leq k
								\text{ and }
								\veclambda < r\} \,.
	\end{equation*}
	We can quantify the size of the index set~$A$
	for~\mbox{$k \in \{1,\ldots,d\}$}
	by standard estimates,
	\begin{equation*}
		\# A \,=\, \sum_{l=0}^k \binom{d}{l} \, (2^r - 1)^l
			\,\leq\, 2^{r k} \sum_{l=0}^k \binom{d}{l}
			\,\leq\, 2^{r k} \, \left(\frac{\euler \, d}{k}\right)^k \,.
	\end{equation*}
	This finally yields the error bound
	for the Monte Carlo method~$\hat{A}_{n,k,r}$
	applied to sign-valued functions~$f \in F_{\mon\pm}^d$
	as stated in the theorem.
	
	Choosing the resolution~\mbox{$r := \lceil \log_2 \frac{15\,d}{\eps}
																			\rceil$}
	will bound the first term~\mbox{$5 \, d \cdot 2^{-r} \leq \eps / 3$}.
	Selecting~\mbox{$k:= \min\left\{\bigl\lfloor
													12 \, \sqrt{d \, r} / \eps
												\bigr\rfloor,\,d\right\}$}
	then guarantees \mbox{$4 \,\sqrt{d \, r} / (k+1) \leq \eps / 3$},
	except for the case~$k=d$ where we can even ignore the second term
	from the estimate~\eqref{eq:error3summands}.
	Finally, the third term \mbox{$4 \cdot(\# A) / n$} can be bounded from above
	by~\mbox{$\eps / 3$} if we put
	\begin{equation*}
		n \,:=\, \left\lceil \frac{12}{\eps}
						\, \exp\left(k
											\left( 1
												+ \log\frac{d}{k}
												+ (\log 2)\, r
											\right)
									\right)
				\right\rceil
			\,\leq\, \exp\left[C \, \frac{\sqrt{d}}{\eps}
											\left(1 + \log \frac{d}{\eps}\right)^{3/2}\right] \,,
	\end{equation*}
	with some suitable numerical constant~$C > 0$.
	By this choice we obtain the error bound~$\eps$ we aimed for.
	
	Note that if~$\eps$ is too small, we can only choose~\mbox{$k = d$}
	for the algorithm~$A_{n,k,r}$.
	In this case, for the approximation of~$f_r$,
	we would take $2^{rd}$~wavelet coefficients into account,
	$n$~would become much bigger
	in order to achieve the accuracy we aim for.
	Instead, one can approximate~$f$ directly
	via the deterministic algorithm~$A_m^d$ from \thmref{thm:MonAppOrderConv},
	which is based on~\mbox{$(m-1)^d$} function values on a regular grid.
	The worst case error is bounded by~\mbox{$e(A_m^d,F_{\mon}^d) \leq d/m$}.
	Taking \mbox{$m := 2^r$},
	this gives the same bound
	that we already have for the accuracy at which~$\sgn f_r$ approximates~$f$,
	see~\eqref{eq:|f-fr|}.
	So for small~$\eps$, which roughly means $\eps \preceq 1/\sqrt{d}$
	(modulo logarithmic terms),
	we take the deterministic upper bound
	\begin{equation*}
		n^{\deter}(\eps,F_{\mon}^d)
			\,\leq\, \exp\left(d \, \log \frac{d}{\eps}
									\right) \,,
	\end{equation*}
	compare \remref{rem:MonAppOrderConv}.
\end{proof}

\begin{remark}[Violation of monotonicity]
%	It should be mentioned that
	For the algorithms we analysed,
	there is no feature which would guarantee
	that the output function~$g$ is a monotonously increasing function.
	In fact, the analysis of \lemref{lem:monSmallWavelet}
	does only require that the function is monotone in each variable,
	but it is not necessary to know whether it is monotonously increasing
	or decreasing.
	
	We may think about a scenario
	where all computed function values are~$1$,
	but accidentally they are computed
	in the lowermost subcube~$C_{r\ones,\zeros}$ of the domain~$[0,1]^d$
	at resolution~$r$,
	and then some function values of the reconstruction~$g$ are still negative
	and violate the assumption of monotonic growth.
	Namely,
	for the linear reconstruction~$h$,
	the value in the uppermost subcube~$C_{r\ones,(2^r-1)\ones}$
	at resolution~$r$ can be written as
	\begin{equation*}
%		h|_{C_{r\ones,(2^r-1)\ones}}
%			\,=\,
				\sum_{\substack{\vecalpha \in \{0,1\}^d \\
														|\vecalpha|_0 \leq k}} (-1)^{|\vecalpha|_0}
			\,=\,
				\sum_{\ell = 0}^k \binom{d}{\ell} (-1)^{\ell} \,.
	\end{equation*}
	If~$k \leq d/2$ is uneven, this value is negative.
	Meanwhile, $h$ is positive in~$C_{r\ones,\zeros}$,
	hence the monotonicity is violated, $g = \sgn h \notin F_{\mon}^d$.
\end{remark}

\begin{remark}[Implementation of the non-linear method $\bar{A}_{n,k,r}$]
	\label{rem:monoMCUBphicost}
	The algorithm~$\bar{A}_{n,k,r}$ as defined in~\eqref{eq:AlgF}
	appears rather abstract with the integral within the definition.
	There is an explicit way of representing the algorithm, though.
	Let~$\phi_{n,k,r}^{\omega}$ denote the mapping
	which returns the output~$h = \phi_{n,k,r}^{\omega}(\vecy)$
	for the linear algorithm~$A_{n,k,r}$ from~\eqref{eq:linAlg}
	when given the information
	$N^{\omega}(f) = \vecy = (y_1,\ldots,y_n) = (f(\vecX_1),\ldots,f(\vecX_n))$.
	For the reconstruction mapping~$\bar{\phi}_{n,k,r}$
	used in~$\bar{A}_{n,k,r}$ one may proceed as follows:
	\begin{itemize}
		\item Rearrange the information~\mbox{$(\vecX_1,y_1),\ldots,(\vecX_n,y_n)$}
			such that~\mbox{$y_1 \leq y_2 \leq \ldots \leq y_n$}.
		\item Define~\mbox{$y_0 := -1$} and~\mbox{$y_{n+1} := +1$},
			and use the representation
			\begin{equation*}
				\bar{\phi}_{n,k,r}^{\omega}(\vecy)
					\,=\, \frac{1}{2}
							\sum_{i=0}^{n}
								(y_{i+1}-y_i)
									\, \sgn \underbrace{
															\phi_{n,k,r}^{\omega}(\overbrace{-1,\ldots,-1
																											}^{\text{$i$ times}},
																						\overbrace{1,\ldots,1
																											}^{\text{$(n-i)$ times}})
														}_{=: g_i} \,.
			\end{equation*}
	\end{itemize}
	Implementing the nonlinear algorithm~$\bar{A}_{n,k,r}$
	is more difficult than for the linear algorithm,
	the cost for processing the collected information may exceed~$n$.
%	It is an interesting question
%	how much the \emph{combinatory cost} for~$\bar{\phi}_{n,k,r}$
%	differs from the cost for~$\phi_{n,k,r}$.
	There are different models of computation,
	see for example the book on IBC of Traub et al.~\cite[p.~30]{TWW88},
	or Novak and Wo\'zniakowski~\cite[Sec~4.1.2]{NW08}.
	Heinrich and Milla~\cite[Sec~6.2]{HeM11} point out
	that for problems with functions as output,
	the interesting question is not always about a complete picture of the
	output~\mbox{$\phi(\vecy)$},
	but about effective computation of approximate
	function values~\mbox{$[\phi(\vecy)](\vecx)$} on demand.
	It makes sense to distinguish
	between pre-processing operations and operations on demand.
	
	In our situation, pre-processing is concerned with
	rearranging the information,
	for which the expected computational cost is of order~$\mathcal{O}(n \log n)$.
	
	The main difficulty when asked to compute a function value~$g(\vecx)$
	on demand
	is to compute~$g_i(\vecx)$ for~$i=1,\ldots,n$.
	Once we know~$g_0(\vecx)$, it will be easier to
	compute~$g_1(\vecx),g_2(\vecx),\ldots$ in consecutive order
	because only few wavelet coefficients are affected when switching
	from $y_i = -1$ to $y_i = +1$.
	Namely, by linearity of~$\phi_{n,k,r}^{\omega}$ we have
	\begin{equation*}
		g_i := g_{i-1} - 2 \, \phi_{n,k,r}^{\omega}(\vece_i) \,,
	\end{equation*}
	where~$\vece_i = (\delta_{ij})_{j=1}^n$ is the $i$-th unit vector in~$\R^n$.
	Going back to the details of one-dimensional Haar wavelets~$\psi_{\alpha_j}$,
	observe that
	\begin{align*}
		[\phi_{n,k,r}^{\omega}(\vece_i)](\vecx)
			&\,=\, \frac{1}{n} \sum_{\substack{\vecalpha \in \N_0^d\\
												|\vecalpha|_0 \leq k\\
												\veclambda < r}}
					\psi_{\vecalpha}(\vecX_i) \, \psi_{\vecalpha}(\vecx)
			\,=\, \frac{1}{n} \sum_{\substack{\vecalpha \in \N_0^d\\
												|\vecalpha|_0 \leq k\\
												\veclambda < r}}
					\prod_{j=1}^d \psi_{\alpha_j}(\vecX_i(j)) \, \psi_{\alpha_j}(x_j) \\
			&\,=\, \frac{1}{n} \sum_{\substack{\vecbeta \in \{0,1\}^d\\
												|\beta|_1 \leq k}}
					\vecZ^{\vecbeta} \,,
	\end{align*}
	where~\mbox{$Z_j := \sum_{\alpha_j=1}^{2^r-1}
												\psi_{\alpha_j}(\vecX_i(j))
													\, \psi_{\alpha_j}(x_j)$}
	and~$\vecX_i(j)$ denotes the $j$-th entry of \mbox{$\vecX_i \in [0,1]^d$}.
	It is readily checked that
	\begin{equation*}
		Z_j = \begin{cases}
						2^r - 1
							\quad&\text{if $\lfloor 2^r \, \vecX_i(j) \rfloor
																= \lfloor 2^r \, x_j \rfloor$,}\\
						-1
							\quad&\text{else,}
					\end{cases}
	\end{equation*}
	so a comparison of the first~$r$ digits of the binary
	representation of~$\vecX_i(j)$ and $x_j$
	is actually enough for determining~$Z_j$.
	In the end,
	we only need the number~$b$ of coordinates~\mbox{$j \in \{1,\ldots,d\}$}
	where \mbox{$\lfloor 2^r \, \vecX_i(j) \rfloor
									= \lfloor 2^r \, x_j \rfloor$},
	and obtain
	\begin{equation*}
		n \, [\phi_{n,k,r}^{\omega}(\vece_i)](\vecx)
			\,=\, \sum_{\ell = 0}^{b \wedge k} \binom{b}{\ell} \, (2^r - 1)^{\ell} \,
					\sum_{m = 0}^{(d-b)\wedge(k-\ell)} \binom{d-b}{m} \, (-1)^m
			\;=:\; \chi(b) \in \Z\,.
	\end{equation*}
	These values~\mbox{$\chi(b)$}
	are needed for \mbox{$b \in \{0,\ldots,d\}$}.
	Since they only depend on parameters of the algorithm,
	they can be prepared before the algorithm is applied to an instance
	and do not count for the cost of processing the data.
	From these values one can also compute~$g_0(\vecx)$.
	If we do not want to store the values
	$[\phi_{n,k,r}^{\omega}(\vece_i)](\vecx)$ for \mbox{$i=1,\ldots,n$},
	we will need to compute them twice -- once in order to evaluate~$g_0(\vecx)$,
	once for calculating the difference between
	$g_i(\vecx)$ and $g_{i-1}(\vecx)$.
	Or we only compute them once but need storage for~$n$ numbers.
	In any case, the number of binary and fixed point operations
	needed for computing the output
	$g(\vecx) = [\bar{\phi}_{n,k,r}^{\omega}](\vecx)$ on demand
	is of order~$\mathcal{O}(drn)$.
\end{remark}

\section{Intractability of randomized approximation}
\label{sec:LBs}

\subsection{The result -- Monte Carlo lower bound}
\label{sec:LBs-Result}

As we have seen in \secref{sec:UBs},
for the $L_1$-approximation of monotone functions
the curse of dimensionality does not hold anymore in the randomized setting.
Within this section, however, we show a lower bound which implies that
%that, for any fixed~\mbox{$\eps \in (0,1)$},
%the $\eps$-complexity depends at least exponentially on~$\sqrt{d}$
%in the randomized setting.
%Even worse,
the problem is still \emph{not weakly tractable} in the randomized setting,
we thus call it \emph{intractable}.

For the proof we switch to an average case setting for Boolean functions,
an idea that has already been used by
Blum, Burch, and Langford~\cite[Sec~4]{BBL98}.
From their result one can already extract
that for any fixed~\mbox{$\eps \in (0,1)$}
the Monte Carlo complexity for the approximation
of monotone functions depends at least exponentially on~$\sqrt{d}$.
The focus of Blum et al.\ was to show
that if we admit the information budget~$n$ to grow only polynomially in~$d$,
the achievable error will approach the initial error
at a rate of almost~\mbox{$1/\sqrt{d}$}.
In contrast, the aim of this paper is to obtain lower complexity bounds
for a range of error thresholds~$\eps$
which is moderately approaching zero as~$d$ is growing.
This enables us to prove \emph{intractability}.
The different focus leads to the necessity
of different tools within the corresponding lower bound proof,
see \cite[Remark~4.8]{Ku17} for a detailed discussion.

\begin{theorem} \label{thm:LB}
	Consider the randomized approximation of monotone functions.
	There exist constants~\mbox{$\sigma_0,\nu,\eps_0 > 0$}
	and \mbox{$d_0 \in \N$}
	such that for~\mbox{$d \geq d_0$} we have
	\begin{equation*}
		n^{\ran}(\eps_0,F_{\mon}^d)
			\,>\, \nu \, \exp(\sigma_0 \sqrt{d}) \,,
	\end{equation*}
	and moreover, for \mbox{$\eps_0 \, \sqrt{d_0/d} \leq \eps \leq \eps_0$}
%	and \mbox{$d \geq d_0 \, \left(\eps_0 / \eps\right)^2$}
	we have
	\begin{equation*}
		n^{\ran}(\eps,F_{\mon}^d)
			\,>\, \nu \, \exp\left(c \, \frac{\sqrt{d}}{\eps}\right) \,,
	\end{equation*}
	with $c = \sigma_0 \, \eps_0$ \,.
	
	Specifically, for~\mbox{$d \geq d_0 = 100$}
	and \mbox{$\eps_0 = \frac{1}{15}$} we have
	\begin{equation*}
		n^{\ran}({\textstyle \frac{1}{15}},F_{\mon}^d)
			\,>\, 108 \cdot \exp(\sqrt{d}-\sqrt{100}) \,,
	\end{equation*}
	for~\mbox{$d=100$} this means \mbox{$n^{\ran} > 108$}.
	For~\mbox{$\frac{1}{15} \sqrt{100/d} \leq \eps \leq \frac{1}{15}$}
%	and~\mbox{$d \geq 1/(9 \, \eps^2)$}
	we have
	\begin{equation*}
		n^{\ran}(\eps,F_{\mon}^d)
			\,>\, 108 \cdot \exp\left(\frac{\sqrt{d}}{15 \, \eps} -\sqrt{100}\right)
			\,.
	\end{equation*}
\end{theorem}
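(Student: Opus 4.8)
The plan is to reduce the randomized approximation of real-valued monotone functions on $[0,1]^d$ to the randomized approximation of \emph{Boolean} monotone functions $f\colon\{0,1\}^D\to\{0,1\}$, and then to prove a lower bound for the latter by an average-case (Bakhvalov-type) argument adapted from Blum, Burch and Langford~\cite{BBL98}. First I would set up the embedding: given $D$, partition $[0,1]^d$ (for a suitable $d$ comparable to $D$) so that a Boolean function on the hypercube lifts to a sign-valued monotone function in $F_{\mon\pm}^d\subset F_{\mon}^d$; identifying coordinates of $\{0,1\}^D$ with blocks of the domain, the indicator structure should be arranged so that an $L_1$-error bound $\eps$ on the real problem translates into an error bound of the same order on the discrete problem, and so that $n$ function evaluations in the real model can be simulated by $n$ queries in the Boolean model. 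The point of building the bound around $\eps_0=1/15$ and $d_0=100$ is that the embedding needs a fixed amount of ``room'', and the joint $(\eps,d)$-statement then follows by a scaling/padding trick: to get the bound for general $\eps\in[\eps_0\sqrt{d_0/d},\eps_0]$, one only uses $d' = d_0/(15\eps)^2 \cdot(\text{something})\le d$ many active coordinates, embeds the hard instance on those, and leaves the rest inert, so that the effective dimension in the exponent is $\sqrt{d}/(15\eps)$ rather than $\sqrt{d}$.

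Next I would carry out the core average-case lower bound for Boolean monotone functions. The standard device is Yao's principle / Bakhvalov's trick: to lower-bound $e^{\ran}$ it suffices to exhibit a probability distribution $\mu$ on $F_{\mon\pm}^d$ (or its Boolean analogue) such that \emph{every} deterministic algorithm using $n$ function values has expected error (over $f\sim\mu$) larger than the target. For the hard distribution I would take random monotone functions concentrated near the ``middle layer'' of the hypercube — e.g.\ functions of the form $f(\vecx)=\sgn(\sum_j x_j - D/2 + \text{random perturbation on the middle slices})$, where the perturbation is governed by a random monotone Boolean function on a thin band of Hamming weights around $D/2$. The key combinatorial facts are: (i) the middle layers carry an overwhelming fraction of the mass of the uniform measure on $\{0,1\}^D$ (width $\sim\sqrt{D}$), so uncertainty there costs $\Omega(1)$ in $L_1$; (ii) with $n$ queries, an adaptive deterministic algorithm can ``pin down'' the random function only on a negligible fraction of the relevant middle-layer points unless $n$ is exponentially large in $\sqrt{D}$, because each query reveals the value at one point and, by monotonicity, forces values only on its up-set and down-set, which — for points near the middle layer — have small relative volume within the band.

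The main obstacle I expect is step (ii): quantifying precisely how much ``information'' a single function value carries about a random monotone Boolean function supported on the middle band, and showing this is exponentially small in $\sqrt{D}$. This requires a careful choice of the random ensemble so that the conditional distribution of $f$ on the unqueried middle-layer points, given $n$ queries and their monotone consequences, remains close to its prior — essentially an anti-concentration / entropy argument about random monotone Boolean functions. I would model $f$ on the band as (a monotonization of) independent fair coins on an antichain of size exponential in $\sqrt{D}$ (e.g.\ a large antichain inside the middle layers), so that distinct points are genuinely independent and each query kills at most one coordinate plus the forced comparabilities; then a counting bound shows $n$ must be exponential in $\sqrt{D}$ to reduce the expected disagreement below a fixed constant. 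Once that quantitative claim is in hand, plugging in the embedding and the padding/scaling trick yields $n^{\ran}(\eps,F_{\mon}^d) > \nu\exp(c\sqrt{d}/\eps)$ on the stated range, with $c=\sigma_0\eps_0$; specializing to $\eps=\eps_0=1/15$, $d_0=100$ and tracking the explicit constants through the antichain size and the middle-layer mass estimates gives the concrete bound $n^{\ran}(\tfrac1{15},F_{\mon}^d) > 108\exp(\sqrt{d}-10)$, and the final displayed inequality for $\eps\le\eps_0$ follows identically with $\sqrt{d}$ replaced by $\sqrt{d}/(15\eps)$.
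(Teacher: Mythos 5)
Your overall plan — reduce to Boolean monotone functions and apply Bakhvalov's minimax trick with a hard distribution, following Blum, Burch and Langford — is the paper's route. But there are two genuine gaps, one in the hard distribution and one in the $(\eps,d)$-scaling step.

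First, the hard distribution you describe would not yield a lower bound at a fixed constant error $\eps_0$. You propose placing independent coins on an antichain ``inside the middle layers'' (or, equivalently, perturbing $\sgn(\sum_j x_j - d/2)$ randomly on the middle slices). The resulting uncertainty is then concentrated on that antichain, whose share of uniform measure on $\{0,1\}^d$ is at most $\binom{d}{\lfloor d/2\rfloor}2^{-d}\sim 1/\sqrt{d}$, so the expected $L_1$-distance you can force onto the algorithm is $O(1/\sqrt{d})$, not a constant. This is exactly the obstruction that confines the Bshouty--Tamon argument to decaying thresholds $\eps\preceq 1/(\sqrt{d}\log d)$. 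What the paper (following Blum et al.) does instead is place the random coins on the slice at Hamming weight $t=\lceil\tau\sqrt{d}\rceil$, which is far \emph{below} the middle; the function is $+1$ on the up-set of the random subset $U$ of that slice (capped at a level $b\approx d/2 + \beta\sqrt{d}/2$). The boundary of the $+1$-region then cuts through a \emph{band} of width $\Theta(\sqrt{d})$ around the middle whose total probability is $\Theta(1)$, and for points in that band the conditional probability of $f(\vecx)=\pm 1$ stays bounded away from $0$ and $1$ even after $n\leq\nu 2^{t}$ queries. Getting constant $\eps_0$ depends on this geometry.

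Second, the padding/scaling trick does not deliver the $1/\eps$-factor in the exponent. Embedding a $d'$-dimensional hard instance into $[0,1]^d$ with $d\geq d'$, and combining with monotonicity of the $\eps$-complexity in $\eps$, gives at best
\begin{equation*}
	n^{\ran}(\eps,F_{\mon}^d)\;\geq\; n^{\ran}(\eps_0,F_{\mon}^{d'})\;>\;\nu\,\exp(\sigma_0\sqrt{d'})\;\leq\;\nu\,\exp(\sigma_0\sqrt{d})
\end{equation*}
for any $\eps\leq\eps_0$, which is strictly weaker than the claimed $\nu\exp(\sigma_0\eps_0\sqrt{d}/\eps)$ as soon as $\eps<\eps_0$. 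To obtain the genuine joint $(\eps,d)$-dependence you must re-run the \emph{same} construction at dimension $d$ with re-tuned parameters: increase the slice level $\tau$ (so the cardinality exponent grows) while simultaneously narrowing the uncertainty band ($\alpha,\beta\to\alpha_0\tau_0/\tau,\beta_0\tau_0/\tau$) so that the forced error drops only linearly like $\eps_0\tau_0/\tau$. Proving this requires tracking how every constant in the estimate behaves under the re-scaling, and it is precisely here that the paper needs the Berry--Esseen inequality (with explicit constants) rather than the Hoeffding/Chernoff bounds used by Blum et al.; otherwise the band estimates do not survive shrinking $\eps$. This Step 6 of the paper's proof is the main new technical content over Blum et al., and it is missing from your sketch; padding does not substitute for it.
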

All these lower bounds hold for varying cardinality as well,
see~\cite[Section~4.3]{Ku17}.
%The results do also match the upper bounds in~\thmref{thm:monoUBs},
%except for logarithmic terms in the exponent.
%Especially the constraint~$\eps \succeq 1/\sqrt{d}$
%fits to the observation that for smaller~$\eps$ best known algorithms
%are deterministic and we have a different joint $(\eps,d)$-dependency
%in that regime.
Before we give the proof in \secref{sec:monoMCLBs-Proof},
%(page~\pageref{proof:monoLBmain}),
we discuss a theoretical consequence of the theorem.

\begin{remark}[Intractability] \label{rem:monMCLBintractable}
	The above theorem shows that the approximation of monotone functions
	is \emph{not weakly tractable}.
	Indeed,	consider the sequence~\mbox{$(\eps_d)_{d=d_0}^{\infty}$}
	of error thresholds~\mbox{$\eps_d := \eps_0 \, \sqrt{d_0 / d}$}.
	Then, regarding~$n^{\ran}(\eps,F_{\mon}^d)$ as a function~$n(\eps,d)$,
	we observe
	\begin{equation*}
		\limsup_{\eps^{-1}+d \to \infty} \frac{\log n(\eps,d)}{\eps^{-1}+d}
			\,\geq\, \lim_{d \rightarrow \infty}
							\frac{\log n(\eps_d,d)}{\eps_d^{-1} + d}
			\,\geq\, \lim_{d \rightarrow \infty}
					\frac{\sigma_0 \, d /\sqrt{d_0} + \log \nu
							}{\eps_0^{-1} \sqrt{d/d_0} + d}
			\,=\, \frac{\sigma_0}{\sqrt{d_0}} > 0 \,.
	\end{equation*}
	This contradicts the definition of weak tractability,
	as defined in the book of Novak and Wo\'zniakowski~\cite{NW08}.
	Namely, the problem would be called \emph{weakly tractable}
	if the limit superior was zero.
%	which states that the limit superior should be zero.
	We can also put it like this:
	in our situation $n(\eps_d,d)$ grows exponentially in~$d$
	despite the fact that~$\eps_d$ is only moderately decreasing.
	
	Actually, this behaviour has already been known since the paper
	of Bshouty and Tamon 1996~\cite[Thm~5.3.1]{BT96}
	on Boolean monotone functions,
	however, research on weak tractability has not yet been started
	at that time.
	Their lower bound can be summarized as follows:
	For moderately decaying error
	thresholds~\mbox{$\eps_d \preceq (\sqrt{d} \, (1+ \log d))^{-1}$}
	and sufficiently large~$d$, we have
	\begin{equation*}
		n^{\ran}(\eps_d,F_{\mon}^d)
			\,\geq\, c \, 2^d / \sqrt{d} \,,
	\end{equation*}
	with some numerical constant~$c > 0$.
	Interestingly, the proof is based on purely combinatorial arguments,
	without applying minimax arguments.
	From their approach, however, we can only derive a statement
	for smaller and smaller~$\eps$ as~\mbox{$d \rightarrow \infty$}.
	So the new lower bounds indeed give a more complete picture
	on the joint $(\eps,d)$-dependency of the complexity.
	Since the proof of our theorem is based on Boolean functions,
	actually we have lower bounds for the easier problem
	of approximating Boolean monotone functions.
\end{remark}

\subsection{The proof of the Monte Carlo lower bound}
\label{sec:monoMCLBs-Proof}

This section contains the proof of \thmref{thm:LB}.
Key ideas have already been used by Blum et al.~\cite{BBL98},
albeit only in the context of Boolean functions.
Some modifications within the present proof are mere simplifications
with the side effect of improved constants,
but several changes are substantial and marked as such.

We consider the subclass~$F_2^d \subset F_{\mon\pm}^d$
of sign-valued monotone functions
which are constant on each of the~$2^d$ subcubes~$C_{\veci}$,
$\veci \in \{0,1\}^d$,
if we split the domain~$[0,1]^d$
just as in the proof of \thmref{thm:MonAppOrderConv} with~$m=2$.
Any such function~$f \in F_2^d$ is uniquely determined
by its function values~$f(\vecx)$ in the corners~$\vecx \in \{0,1\}^d$,
we have~$f|_{C_{\vecx}} = f(\vecx)$,
so effectively we deal with Boolean functions.
The lower bound proof for general Monte Carlo methods
relies on \emph{Bakhvalov's trick}~\cite{Bakh59},
compare the lower bound part within the proof of~\thmref{thm:MonAppOrderConv}
for a more basic example of this proof technique.
Here now, we construct a probability measure~$\mu$ on~$F_2^d$
and use that for any Monte Carlo algorithm~\mbox{$(A_n^{\omega})$}
we have
\begin{align}
	e((A_n^{\omega}),F_{\mon}^d)
		&\,=\, \sup_{f \in F_{\mon}^d} \expect \|A_n^{\omega} - f\|_{L_1}
		\,\geq\, \int \expect \|A_n^{\omega}(f) - f\|_{L_1} \dint\mu(f)
		\nonumber\\
	\text{[Fubini]}\qquad
		&\,=\, \expect \int \|A_n^{\omega}(f) - f\|_{L_1} \dint\mu(f)
		\,\geq\, \inf_{A_n} \underbrace{\int \|A_n(f) - f\|_{L_1} \dint\mu(f)
																	}_{=: e(A_n,\mu)}
		\label{eq:Bakh}\,,
\end{align}
where the infimum runs over all deterministic algorithms
$A_n = \phi \circ N$ that use at most~$n$ function values.
In order to construct optimal algorithms~$A_n$
with regard to minimizing the so-called
\emph{$\mu$-average error}~$e(A_n,\mu)$,
one will need to optimize	the output function~$g=\phi(\vecy)$
with respect to the conditional measure~$\mu_{\vecy}$
after knowing information~$\vecy := N(f)$.
In our specific situation, which is the \mbox{$L_1$-approximation}
of sign-valued functions, the optimal output is sign-valued as well,
taking the pointwise conditional median.
The conditional error for this optimal output
is given by
\begin{multline}
	\inf_{g \in L_1} \int \|f-g\|_{L_1} \dint\mu_{\vecy}(f)
		\,=\, 2 \int_{[0,1]^d} \min\left\{\mu_{\vecy}\{f(\vecx)=-1\},
																				\mu_{\vecy}\{f(\vecx)=+1\}
																\right\}
							\dint\vecx
			\\%\nonumber\\
		\,=\, 2^{1-d} \sum_{\vecx \in \{0,1\}^d} 
											\min\left\{\mu_{\vecy}\{f(\vecx)=-1\},
																	\mu_{\vecy}\{f(\vecx)=+1\}
													\right\}
			\label{eq:muyoptg} \,.
\end{multline}
We will further use the concept of \emph{augmented information}
$\tilde{y} = \widetilde{N}(f)$ which contains additional knowledge
on the input compared to the original information~$\vecy = N(f)$.
This will lead to more powerful algorithms with smaller errors,
but it is done for the sake of an easier description
of the corresponding conditional measure~$\mu_{\tilde{y}}$.
The lower bounds we obtain for algorithms with the augmented oracle,
a fortiori, are lower bounds for algorithms with the standard oracle.

The proof is organized in seven steps.
\begin{description}
	\item[{\proofstepref{proof:monoLB1}}:]
		The general structure of the measure~$\mu$ on~$F_2^d$.
	\item[{\proofstepref{proof:monoLB2}}:]
		Introduce the augmented information.
	\item[{\proofstepref{proof:monoLB3}}:]
		Estimate the number of points~$\vecx \in \{0,1\}^d$
		for which $f(\vecx)$ is still -- to some extend -- undetermined,
		even after knowing the augmented information.
	\item[{\proofstepref{proof:monoLB4}}:]
		Further specify the measure~$\mu$,
		and give estimates on the conditional probability
		for the event~\mbox{$f(\vecx) = -1$} for the set of still fairly
		uncertain~$\vecx$ from the step before.
	\item[{\proofstepref{proof:monoLB5}}:]
		A general formula for the lower bound.
	\item[{\proofstepref{proof:monoLB6}}:]
		Relate estimates for~$\eps_0$ and~$d_0$ to estimates
		for smaller~$\eps$ and larger~$d$.
	\item[{\proofstepref{proof:monoLB7}}:]
		Explicit numerical values.
\end{description}
	
	\proofstep{proof:monoLB1}{%
		General structure of the measure~$\mu$.}
	We define a measure $\mu$ on~$F_2^d$
	that can be represented by a randomly drawn
	set~\mbox{$U \subseteq W := \{\vecx \in \{0,1\}^d \mid |\vecx|_1 = t\}$},
	with \mbox{$t \in \N$} being a suitable parameter, and a boundary
	value~\mbox{$b \in \N$}, \mbox{$t \leq b \leq d$}, namely
	\begin{equation} \label{eq:f_U}
		f_{U}(\vecx)
			:=\begin{cases}
					-1, \qquad&\text{if \,$|\vecx|_1 \leq b$\,
													and $\nexists\, \vecu \in U$
														with \,$\vecu \leq \vecx$\,,} \\
					+1, \qquad&\text{if \,$|\vecx|_1 > b$\, or
														$\exists\, \vecu \in U$
														with \,$\vecu \leq \vecx$\,.}
				\end{cases}
	\end{equation}
	The boundary value~\mbox{$b \in \N$}
	will facilitate considerations in connection with the augmented
	information in \proofstepref{proof:monoLB2}.
%	(Blum et al.~\cite{BBL98} in their proof used Chernoff bounds.)
	We draw~$U$ such that the~\mbox{$f(\vecw)$} with~$\vecw \in W$ are independent
	Bernoulli random variables
	with~\mbox{$p = \mu\{f(\vecw) = +1\} = 1 - \mu\{f(\vecw) = -1\}$}.
	The parameter~\mbox{$p \in (0,1)$}
	will be specified in \proofstepref{proof:monoLB4}.
	
	\proofstep{proof:monoLB2}{%
		Augmented information.}
	Now, for any (possibly adaptively obtained)
	info~\mbox{$\vecy = N(f) = (f(\vecx_1),\ldots,f(\vecx_n))$}
	with~\mbox{$\vecx_i \in \{0,1\}^d$}, we define the augmented information
	\begin{equation*} %\label{eq:augmented y}
		\tilde{y} := (V_{\ominus},V_{\oplus}),
	\end{equation*}
	where~\mbox{$V_{\ominus} \subseteq W \setminus U$} and~\mbox{$V_{\oplus} \subseteq U$}
	represent knowledge about the instance~$f$ that $\mu$-almost surely
	implies the information~$\vecy$.
	We know \mbox{$f(\vecu) = -1$ for $\vecu \in V_{\ominus}$},
	and \mbox{$f(\vecu) = +1$ for $\vecu \in V_{\oplus}$}.
	In detail,
	let $\leq_{\text{L}}$ be the lexicographic order
	of the elements of~$W$,
	then \mbox{$\min_{\text{L}} V$} denotes the first element
	of a set~$V \subseteq W$ with respect to this order.
	For a single function evaluation~$f(\vecx)$
	the augmented oracle reveals the sets
	\begin{align*}
			V_\ominus^{\vecx}
				&:= \begin{cases}
							\emptyset
									&\quad\text{if \,$|\vecx|_1 > b$\,,}\\
							\{\vecv \in W \mid \vecv \leq \vecx\}
									&\quad\text{if \,$f(\vecx) = -1$\,,}\\
							\{\vecv \in W
									\mid
									\vecv \leq \vecx
										\text{ and }
											\vecv <_{\text{L}}
													{\textstyle \min_{\text{L}}}
														\{\vecu \in U \mid
															\vecu \leq \vecx\}
							\}
									&\quad\text{if \,$f(\vecx) = +1$\,} \\
									&\quad\text{and \,$|\vecx|_1 \leq b$\,,}\\
						\end{cases}\\
			V_{\oplus}^{\vecx}
				&:= \begin{cases}
							\emptyset
									&\quad\text{if \,$|\vecx|_1 > b$\,
															or \,$f(\vecx) = -1$\,,}\\
							\{{\textstyle\min_{\text{L}}}
									\{\vecu \in U \mid
										\vecu \leq \vecx\}
							\}
									&\quad\text{if \,$f(\vecx) = +1$\,
															and \,$|\vecx|_1 \leq b$\,,}
						\end{cases}
	\end{align*}
	and altogether the augmented information is
	\begin{equation*}
		\tilde{y} = (V_{\ominus},V_{\oplus})
			:=\left(\bigcup_{i=1}^n V_{\ominus}^{\vecx_i}
							\, , \,
							\bigcup_{i=1}^n V_{\oplus}^{\vecx_i}
				\right) \,.
	\end{equation*}
	Note that computing~$f(\vecx)$ for~\mbox{$|\vecx|_1 > b$}
	is a waste of information,
	so no algorithm designer would decide to compute such samples.
	Since~\mbox{$\# V_{\ominus}^{\vecx} \leq \binom{|\vecx|_1}{t}
																	 \leq \binom{b}{t}$}
	for~\mbox{$|\vecx|_1 \leq b$},
	and~\mbox{$\# V_{\oplus}^{\vecx} \leq 1$},
	we have the estimates
	\begin{equation}\label{eq:|V01|bound}
		\# V_{\ominus} \leq n \, \binom{b}{t} \,,
		\quad\text{and}\quad
		\# V_{\oplus} \leq n \,.
	\end{equation}
	(Blum et al.~\cite{BBL98} did not have a boundary value~$b$
	but used a Chernoff bound for giving a probabilistic bound on~$\# V_{\ominus}$
	instead.)
	
%	as well as the exact value
%	\begin{equation}\label{eq:|W|}
%		\# W = \binom{d}{t} \,.
%	\end{equation}
	
	\proofstep{proof:monoLB3}{%
		Number of points~\mbox{$\vecx \in \{0,1\}^d$}
		where
		%, is the conditional distribution of
		\mbox{$f(\vecx)$} is still fairly uncertain.}
	For any point~\mbox{$\vecx \in \{0,1\}^d$} we define the set
	\begin{equation*}
		W_{\vecx} := \{\vecw \in W \mid
												\vecw \leq \vecx\} \,
	\end{equation*}
	of points that are ``relevant'' to~\mbox{$f(\vecx)$}.
	Given the augmented information
	\mbox{$\tilde{y} = (V_{\ominus},V_{\oplus})$},
	we are interested in points where it is not yet clear
	whether~\mbox{$f(\vecx) = +1$} or~\mbox{$f(\vecx) = -1$}.
	In detail, these are points~$\vecx$ where~\mbox{$W_{\vecx} \cap V_{\oplus} = \emptyset$},
	for that \mbox{$f(\vecx) = -1$} be still possible.
	Furthermore, \mbox{$W_{\vecx} \setminus V_{\ominus}$} shall be big enough,
	say \mbox{$\#(W_{\vecx} \setminus V_{\ominus}) \geq M$} with~\mbox{$M \in \N$},
	so that the conditional probability~%
	\mbox{$p_{\vecx} := \mu_{\tilde{y}}\{f(\vecx) = +1\}$}
	is not too small.
	For our estimates in \eqref{eq:Markov W_x-V_->M}
	it will be necessary to restrict
	to points~\mbox{$|\vecx|_1 \geq a \in \N$},
	we suppose~\mbox{$t \leq a \leq b$}.
	The set of all these points shall be denoted by
	\begin{align*}
			B &:= \{\vecx \in D_{ab} \mid
							W_{\vecx} \cap V_{\oplus} = \emptyset ,\,
							\#(W_{\vecx} \setminus V_{\ominus}) \geq M
						\} \,,\\
			\text{where} \quad
			D_{ab} &:= \{\vecx \in \{0,1\}^d \mid
										a \leq |\vecx|_1 \leq b\}\,.
	\end{align*}
	We aim to find a lower bound for the cardinality of $B$.
	Within the proof of Blum et al.~\cite{BBL98} Hoeffding bounds were used.
	We will employ the Berry-Esseen inequality
	on the speed of convergence of the Central Limit Theorem, instead,
	and it is only with Berry-Esseen
	that we can draw conclusions for small~$\eps$,
	as it is done in \proofstepref{proof:monoLB6}.

\begin{proposition}[Berry-Esseen inequality] \label{prop:BerryEsseen}
	Let \mbox{$Z_1,Z_2,\ldots$} be i.i.d. random variables with
	zero mean, unit variance and finite third absolute moment~$\beta_3$.
	Then there exists a universal constant~$C_0$
	such that
	\begin{equation*}
		\left|\P\biggl\{\frac{1}{\sqrt{d}} \sum_{j=1}^d Z_j \leq x \biggr\}
					- \Phi(x)\right|
			\,\leq\, \frac{C_0 \, \beta_3}{\sqrt{d}} \,,
	\end{equation*}
	where $\Phi(\cdot)$ is the cumulative distribution function of the
	univariate standard normal distribution.\\
	The best known estimates on~$C_0$ are
	\begin{equation*}
		C_E := \frac{\sqrt{10}+3}{6\sqrt{2\pi}} = 0.409732\ldots
			\leq C_0
			< 0.4748
	\end{equation*}
	see Shevtsova~\cite{Shev11}.
\end{proposition}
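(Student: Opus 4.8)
The plan is to establish this by the classical Fourier-analytic route. Write $\varphi(u) := \expect \euler^{\imag u Z_1}$ for the common characteristic function, so that $S_d := d^{-1/2}\sum_{j=1}^d Z_j$ has characteristic function $u \mapsto \varphi(u/\sqrt{d})^d$, while the limiting law $\Phi$ has characteristic function $\euler^{-u^2/2}$. Since $\expect Z_1^2 = 1$ forces $\beta_3 = \expect|Z_1|^3 \geq 1$ by Lyapunov's inequality, the asserted bound is trivial once $\beta_3/\sqrt d$ exceeds a fixed threshold (the left-hand side never exceeds~$1$), so I may assume $\beta_3/\sqrt d$ is small. The engine of the proof is \emph{Esseen's smoothing inequality}: for a distribution function $F$ with characteristic function $\hat F$ and a differentiable comparison function $G$ with $G(-\infty)=0$, $G(+\infty)=1$ and $\sup_x |G'(x)| \leq m$, one has for every $T>0$
\begin{equation*}
	\sup_{x\in\R} |F(x)-G(x)|
		\,\leq\, \frac{1}{\pi}\int_{-T}^{T}
								\left|\frac{\hat F(u) - \hat G(u)}{u}\right| \dint u
							+ \frac{c_1\,m}{T}
\end{equation*}
with a universal constant~$c_1$. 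I would apply this with $F$ the distribution function of $S_d$, $G = \Phi$ (so $m = (2\pi)^{-1/2}$ and $\hat G(u) = \euler^{-u^2/2}$), and $T := \gamma\sqrt{d}/\beta_3$ for a small absolute constant~$\gamma$ to be chosen; then the second term is already at most $c_2\,\beta_3/\sqrt d$, and the task reduces to bounding the integral.

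For the integral I would estimate the integrand on $|u|\leq T$. A second-order Taylor expansion of $\varphi$ using $\expect Z_1 = 0$ and $\expect Z_1^2 = 1$ gives $|\varphi(v) - (1 - v^2/2)| \leq \tfrac16\,\beta_3\,|v|^3$; for $v = u/\sqrt d$ with $|u|\leq T$ and $\gamma$ small enough this also yields $|\varphi(u/\sqrt d)| \leq \euler^{-u^2/3}$ (use $1-x \leq \euler^{-x}$ and absorb the cubic remainder into the quadratic term). Combining this with the elementary bound $|a^d - b^d| \leq d\,|a-b|\,\max(|a|,|b|)^{d-1}$ applied to $a = \varphi(u/\sqrt d)$ and $b = \euler^{-u^2/(2d)}$ produces a pointwise estimate of the shape $|\varphi(u/\sqrt d)^d - \euler^{-u^2/2}| \leq c_3\,\beta_3\,\frac{|u|^3}{\sqrt d}\,\euler^{-u^2/4}$. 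Dividing by $|u|$ and integrating over all of~$\R$ (the Gaussian factor makes the integral converge to a $d$-independent constant) bounds $\frac{1}{\pi}\int_{-T}^{T}|(\hat F(u)-\hat G(u))/u|\dint u$ by $c_4\,\beta_3/\sqrt d$, and adding the two contributions proves the inequality with \emph{some} universal constant~$C_0$.

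The real difficulty is quantitative: the constants $C_E \leq C_0 < 0.4748$ quoted from Shevtsova~\cite{Shev11} cannot be read off from the crude argument above. Sharpening them requires a more careful smoothing step, a splitting of the range of integration into a central zone where one uses an Edgeworth-type expansion of $\log\varphi(u/\sqrt d)$ in place of a plain Taylor bound and an outer zone where one must exploit the full third-moment information to keep $|\varphi(u/\sqrt d)|$ strictly below~$1$, together with a delicate optimization over all the auxiliary constants (or, alternatively, a completely different non-Fourier approach of Stein--Tyurin type). Since only the \emph{existence} of a universal~$C_0$ together with the numerical value from the literature is needed in the sequel, I would present the qualitative argument above and simply cite~\cite{Shev11} for the explicit bounds rather than reprove them.
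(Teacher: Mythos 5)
The paper does not prove this proposition at all: Berry--Esseen is treated as a classical black-box result, stated without argument and with the numerical constants simply cited from Shevtsova~\cite{Shev11}. Your sketch therefore does more than the paper asks. That said, it is a correct outline of the standard Fourier-analytic route: the reduction to small $\beta_3/\sqrt{d}$ via Lyapunov's inequality $\beta_3\geq 1$ and the trivial bound on the left-hand side, Esseen's smoothing lemma with $T\asymp\sqrt{d}/\beta_3$, the second-order Taylor bound $|\varphi(v)-(1-v^2/2)|\leq\tfrac16\beta_3|v|^3$, the resulting Gaussian-type decay $|\varphi(u/\sqrt d)|\leq\euler^{-u^2/3}$ on the central zone, and the telescoping bound $|a^d-b^d|\leq d|a-b|\max(|a|,|b|)^{d-1}$ to compare $\varphi(u/\sqrt d)^d$ with $\euler^{-u^2/2}$ before integrating. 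All of these steps are sound and together yield the inequality with \emph{some} universal $C_0$. You are also right that the quoted bounds $C_E\leq C_0<0.4748$ cannot be extracted from this crude argument and must be cited; that is exactly how the paper handles it. In short: your proposal is a valid proof of the qualitative statement by the standard method, plus the correct observation that the quantitative refinement should be deferred to~\cite{Shev11} -- which matches the paper's practice of citing rather than reproving.
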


%\begin{corollary} \label{cor:BerryEsseenBinom}
%	Let~\mbox{$a := \lceil \frac{d}{2} + \alpha \frac{\sqrt{d}}{2} \rceil$}
%	and~\mbox{$b := \lfloor \frac{d}{2} + \beta \frac{\sqrt{d}}{2} \rfloor$}
%	with real numbers~\mbox{$\alpha < \beta$}.
%	Then we have
%	\begin{equation*}
%		\frac{1}{2^d} \sum_{k=a}^{b} \binom{d}{k}
%			\,\geq\, \Phi(\beta) - \Phi(\alpha) - \frac{2 \, C_0}{\sqrt{d}} \,.
%	\end{equation*}
%\end{corollary}
%\begin{proof}
%	Consider Rademacher random variables
%	\mbox{$Z_1,\ldots,Z_d \stackrel{\text{iid}}{\sim} \Uniform \{-1,+1\}$}.
%	Note that the~$Z_j$ have zero mean, unit variance,
%	and third absolute moment~\mbox{$\beta_3 = 1$}.
%	Applying \propref{prop:BerryEsseen} twice to the~$Z_j$, we obtain
%	\begin{equation*}
%		\frac{1}{2^d} \sum_{k=a}^{b} \binom{d}{k}
%			\,=\, \P\biggl\{\alpha \leq \frac{1}{\sqrt{d}}
%																\, \sum_{j=1}^d Z_j \leq \beta \biggr\} \\
%			\,\geq\, \Phi(\beta) - \Phi(\alpha) - \frac{2 \, C_0}{\sqrt{d}} \,.
%	\end{equation*}
%\end{proof}

\proofsubstep{proof:monoLB3.1}{%
		Bounding~$\# D_{a,b}$.}
	Let~\mbox{$a := \lceil \frac{d}{2} + \alpha \frac{\sqrt{d}}{2} \rceil$}
	and~\mbox{$b := \lfloor \frac{d}{2} + \beta \frac{\sqrt{d}}{2} \rfloor$}
	with~\mbox{$\beta - \alpha \geq 2/\sqrt{d} $}, hence~$a \leq b$.
%	By \corref{cor:BerryEsseenBinom} we have
	Consider Rademacher random variables
	\mbox{$Z_1,\ldots,Z_d \stackrel{\text{iid}}{\sim} \Uniform \{-1,+1\}$}.
	Note that the~$Z_j$ have zero mean, unit variance,
	and third absolute moment~\mbox{$\beta_3 = 1$}.
	Applying \propref{prop:BerryEsseen} twice to the~$Z_j$, we obtain
	\begin{align} 
		\frac{\# D_{ab}}{\# \{0,1\}^d}
			&\,=\, \frac{1}{2^d} \sum_{k=a}^{b} \binom{d}{k}
			\,=\, \P\biggl\{\alpha \leq \frac{1}{\sqrt{d}}
							\, \sum_{j=1}^d Z_j \leq \beta \biggr\}
			\nonumber\\
			&\,\geq\, \underbrace{\Phi(\beta) - \Phi(\alpha)
													}_{=: C_{\alpha\beta}}
							 - \frac{2 \, C_0}{\sqrt{d}}
			\,=:\, r_0(\alpha,\beta,d) \,.\label{eq:card(D_ab)}
	\end{align}
	
	\proofsubstep{proof:monoLB3.2}{%
		The influence of $\vecw \in W$ (in particular~$\vecw \in V_{\oplus}$).}
	(This step becomes essential
	for small~$\eps$ in \proofstepref{proof:monoLB6}.
	For the focus of Blum et al.~\cite{BBL98}
	with $\eps$ close to the initial error,
	the trivial estimate
	\mbox{$\# (Q_{\vecw} \cap D_{ab}) / \# Q_{\vecw} \leq 1$}
	suffices.)
	Now, let~\mbox{$t := \lceil \tau \sqrt{d} \rceil$}
	with~\mbox{$\tau > 0$}, and for~\mbox{$\vecw \in W$} define
	\begin{equation*}
		Q_{\vecw} := \{\vecx \in \{0,1\}^d \mid
												\vecw \leq \vecx\} \,,
	\end{equation*}
	this is the set of all points inside the area of influence of~$\vecw$.
	Similarly to \proofstepref{proof:monoLB3.1}, we obtain
	\begin{align}
			\frac{\# (Q_{\vecw} \cap D_{ab})}{\# Q_{\vecw}}
				&\,= \, \frac{\# \{\vecx \in \{0,1\}^{d-t} \mid
														a-t \leq |\vecx|_1 \leq b-t\}
											}{2^{d-t}}
					\nonumber\\
				&\,= \, \frac{1}{2^{d-t}} \sum_{k=a-t}^{b-t} \binom{d-t}{k}
					\nonumber\\
%			[\text{\hyperref[prop:BerryEsseen]{Prop~\propref*{prop:BerryEsseen}}}]
			[\text{\propref{prop:BerryEsseen}}]
			\quad &\,\leq \,
							\Phi\left(\frac{2b - t}{\sqrt{d-t}}\right)
										- \Phi\left(\frac{2a - t}{\sqrt{d-t}}\right)
										+\frac{2 \, C_0}{\sqrt{d-t}}
					\nonumber\\
			[\text{\eqref{eq:Gauss-stretch}, \eqref{eq:Gauss-shift}}]
			\quad&\,\leq \,
							\biggl[\underbrace{\Phi\left(\beta-\tau\right)
																- \Phi\left(\alpha - \tau\right)
																}_{=: C_{\alpha\beta\tau}}
											+\underbrace{\left(\frac{1}{\sqrt{2 \pi}} + 2 \, C_0\right)
																	}_{=: C_1}
													\frac{1}{\sqrt{d}}
								\biggr]
								\, \frac{1}{\sqrt{1 - t/d}}\,,
			\label{eq:Q_w-influenced}
	\end{align}
	where for $\tau < \sqrt{d} - 1/\sqrt{d}$ we have
	\begin{equation} \label{eq:k_t(d)}
		1 \,\leq\, \frac{1}{\sqrt{1-t/d}}
			\,\leq\, \frac{1}{\sqrt{1 - \tau / \sqrt{d} - 1/d}}
			\,=:\, \kappa_{\tau}(d)
			\,\xrightarrow[d\rightarrow\infty]{}\, 1 \,.
	\end{equation}
	Within the above calculation~\eqref{eq:Q_w-influenced},
	we exploited that the density of the Gaussian distribution
	is decreasing with growing distance to~$0$,
	in detail, for $t_0<t_1$ and $\kappa \geq 1$ we have
	\begin{align}
		\Phi(\kappa \, t_1) - \Phi(\kappa \, t_0)
			&\,=\, \frac{1}{\sqrt{2\pi}} \int_{\kappa \, t_0}^{\kappa \, t_1}
				\exp\left(-\frac{t^2}{2}\right) \dint t
			\,=\, \frac{\kappa}{\sqrt{2\pi}} \int_{t_0}^{t_1}
				\exp\left(-\frac{\kappa^2 \, s^2}{2}\right) \dint s
			\nonumber\\
			&\,\leq\, \frac{\kappa}{\sqrt{2\pi}} \int_{t_0}^{t_1}
				\exp\left(-\frac{s^2}{2}\right) \dint s
			\,=\, \kappa \left[\Phi(t_1) - \Phi(t_0)\right] \,.
			\label{eq:Gauss-stretch}
	\end{align}
	Namely, we took~\mbox{$\kappa = 1/\sqrt{1 - t/d}$}
	which comes from replacing~\mbox{$1/\sqrt{d-t}$} by~\mbox{$1/\sqrt{d}$}.
	Furthermore, we shifted the \mbox{$\Phi$-function},
	knowing that its derivative takes values
	between~$0$ and~\mbox{$1/\sqrt{2\pi}$},
	so for~\mbox{$t_0<t_1$} and \mbox{$\delta \in \R$} we have
	\begin{equation} \label{eq:Gauss-shift}
		\Bigl|\bigl[\Phi(t_1 + \delta) - \Phi(t_0 + \delta)\bigr]
								-\bigl[\Phi(t_1) - \Phi(t_0)\bigr]
					\Bigr|
			\,\leq\, \frac{|\delta|}{\sqrt{2\pi}} \,,
	\end{equation}
	in our case \mbox{$\delta = t / \sqrt{d} - \tau
												\leq 1 / \sqrt{d}$}.
	
	\proofsubstep{proof:monoLB3.3}{%
		The influence of~$V_{\ominus}$.}
	Markov's inequality gives
	\begin{equation} \label{eq:V_-Markov}
		\sum_{\vecw \in V_{\ominus}} \# (Q_{\vecw} \cap D_{ab})
			\,=\, \sum_{\vecx \in D_{ab}} \# (W_{\vecx} \cap V_{\ominus})
			\,\geq\, N \, \#\{\vecx \in D_{ab} \mid
										\#(W_{\vecx} \cap V_{\ominus}) \geq N\} \,,
	\end{equation}
	with~\mbox{$N \in \N$}.
	Using this, we can carry out the estimate
	\begin{equation} \label{eq:Markov W_x-V_->M}
		\begin{split}
			\# \{\vecx \in D_{ab} \mid
						\#(W_{\vecx} \setminus V_{\ominus}) \geq M \}
				&\,=\, \# \{\vecx \in D_{ab} \mid
										\#(W_{\vecx} \cap V_{\ominus}) \leq \# W_{\vecx} - M \} \\
				&\,\geq\,
					\# \{\vecx \in D_{ab} \mid
							\#(W_{\vecx} \cap V_{\ominus}) \leq {\textstyle\binom{a}{t}} - M \} \\
				&\,=\,
					\# D_{ab}
					- \# \{\vecx \in D_{ab} \mid
								\#(W_{\vecx} \cap V_{\ominus}) > {\textstyle\binom{a}{t}} - M \} \\
			\text{[\eqref{eq:V_-Markov}]}\quad
				&\,\geq\,
					\# D_{ab}
					- \frac{1}{{\textstyle \binom{a}{t}} - M + 1}
							\, \sum_{\vecw \in V_{\ominus}} \# (Q_{\vecw} \cap D_{ab}) \,.
		\end{split}
	\end{equation}
	
	\proofsubstep{proof:monoLB3.4}{%
		Final estimates on~$\# B$.}
	Putting all this together, we estimate the cardinality of~$B$:
	\begin{align*}		
			\frac{\# B}{\#\{0,1\}^d}
				&\,=\, \frac{\# \left(\{\vecx \in D_{ab} \mid
														\#(W_{\vecx} \setminus V_{\ominus}) \geq M \}
													\setminus \bigcup_{\vecw \in V_{\oplus}} Q_{\vecw}
										\right)
								}{\#\{0,1\}^d} \\
				\text{[\eqref{eq:Markov W_x-V_->M}, any $\vecw \in W$]}\quad
				&\,\geq\,
					\frac{\# D_{ab}}{\#\{0,1\}^d} \\
				&\qquad - \frac{\# Q_{\vecw}}{\# \{0,1\}^d}
								\, \left(\frac{\# V_{\ominus}}{\binom{a}{t} - M + 1} + \#V_{\oplus}\right)
								\, \frac{\# (Q_{\vecw} \cap D_{ab})}{\# Q_{\vecw}}\\
			\text{[\eqref{eq:|V01|bound},
						 \eqref{eq:card(D_ab)},
						 \eqref{eq:Q_w-influenced}+\eqref{eq:k_t(d)}]}\quad
			&\,\geq\,
				C_{\alpha\beta} - \frac{2 \, C_0}{\sqrt{d}}\\
				&\qquad
				- n \, 2^{-t} \, \left(\frac{\binom{b}{t}}{\binom{a}{t} - M + 1} + 1\right)
					\, \left[C_{\alpha\beta\tau}
										+\frac{C_1}{\sqrt{d}}
						\right]
						\, \kappa_{\tau}(d) \,.
	\end{align*}
	Assuming \mbox{$\alpha - 2\tau \geq - \sqrt{d} + 2/\sqrt{d}$}
%	and \mbox{$\beta-\alpha > 2/\sqrt{d}$}
	will guarantee \mbox{$t < a$}.
	We estimate the ratio
	\begin{align} 
%		\begin{split}
			\binom{b}{t}\bigg/\binom{a}{t}
				&\,\leq\, \left(\frac{a+1}{a-t+1}\right)^{b-a}
				\,\leq\, \left(\frac{\frac{d}{2} + \alpha \frac{\sqrt{d}}{2} + 1
												}{\frac{d}{2} + (\alpha - 2\tau) \frac{\sqrt{d}}{2}}
							\right)^{(\beta-\alpha)\,\sqrt{d}/2}
%				\leq \left(1+\frac{1 + \tau \sqrt{d}
%													}{\frac{d}{2} + (\alpha - 2\tau) \frac{\sqrt{d}}{2}}
%							\right)^{(\beta-\alpha)\,\sqrt{d}/2}
					\nonumber\\
				%\leq \exp\left((\alpha + \beta)
									%\frac{\tau \frac{\sqrt{d}}{2} + 1}{\sqrt{d} - (\alpha + \tau)}
									%\right)
				&\,\leq\, \exp\Biggl((\beta - \alpha) \, \tau
												\underbrace{\left(1 + \frac{\alpha - 2 \tau}{\sqrt{d}}
																		\right)^{-1}
																	}_{=: \kappa_{\alpha\tau}(d)}
												+\underbrace{\frac{\beta-\alpha
																					}{\sqrt{d} + \alpha - 2\tau}
																		}_{=: K_{\alpha\beta\tau}(d)}
									\Biggr)
					%\nonumber \\
				\,=:\, \sigma_{\alpha\beta\tau}(d) \,,
				\label{eq:sigma_abt(d)}
%		\end{split}
	\end{align}
	where we have~\mbox{$1 \leq \kappa_{\alpha\tau}(d)
														\xrightarrow[d\rightarrow\infty]{} 1$}
	and \mbox{$0 \leq K_{\alpha\beta\tau}(d) \xrightarrow[d\rightarrow\infty]{} 0$}.
	(Note that the above estimate is asymptotically optimal,
	\mbox{$1 \leq \binom{b}{t}/\binom{a}{t}
										\xrightarrow[d\rightarrow\infty]{}
											\exp\left((\beta-\alpha)\,\tau\right)$}.)
	We finally choose
	the information cardinality~\mbox{$n = \lfloor\nu 2^t \rfloor$},
	and put~\mbox{$M := \lceil \lambda \binom{a}{t} \rceil$}
	with~\mbox{$0<\lambda<1$},
	so that we obtain the estimate
	\begin{align}
		\frac{\# B}{\#\{0,1\}^d}
			&\,\geq\, \underbrace{\left[C_{\alpha\beta} - \frac{2 \, C_0}{\sqrt{d}} 
												\right]
											}_{= r_0(\alpha,\beta,d)}
					- \nu \, \underbrace{\left(\frac{\sigma_{\alpha\beta\tau}(d)
																				}{1-\lambda} + 1
															\right)
																\, \left[C_{\alpha\beta\tau}
																					+\frac{C_1}{\sqrt{d}}
																	\right]
																\, \kappa_{\tau}(d)
															}_{=: r_1(\alpha,\beta,\tau,\lambda,d)}
				\nonumber\\
%			&\;=:\; r_0(\alpha,\beta,d) - \nu \, r_1(\alpha,\beta,\tau,\lambda,d)
%				\nonumber\\
			&\;=:\; r_B(\alpha,\beta,\tau,\lambda,\nu,d) \,.
				\label{eq:|B|>=r0-nu*r1}
	\end{align}
	With all the other conditions on the parameters imposed before,
	for sufficiently large~$d$ we will have~\mbox{$r_0(\ldots) > 0$}.
	Furthermore, we always have~\mbox{$r_1(\ldots) > 0$},
	so choosing~\mbox{$0 < \nu < r_0(\ldots) / r_1(\ldots)$}
	will guarantee $r_B(\ldots)$ to be positive.
		
	\proofstep{proof:monoLB4}{%
		Specification of~$\mu$ and bounding conditional probabilities.}
	We specify the measure~$\mu$
	on the set of functions~\mbox{$\{f_U \mid U \subseteq W\} \subset F_2^d$}
	defined as in~\eqref{eq:f_U}.
	Recall that the~\mbox{$f(\vecw)$} (for~\mbox{$\vecw \in W$})
	shall be independent Bernoulli random variables with
	probability~\mbox{$p = \mu\{f(\vecw) = +1\}$}.
	Knowing the augmented information~%
	\mbox{$\tilde{y} = (V_{\ominus},V_{\oplus})$},
	the values~\mbox{$f(\vecw)$} are still independent random variables
	with conditional probabilities
	\begin{equation*}
		\mu_{\tilde{y}}\{f(\vecw) = +1\} =
			\begin{cases}
				0 &\text{if $\vecw \in V_{\ominus}$,} \\
				1 &\text{if $\vecw \in V_{\oplus}$,} \\
				p &\text{if $\vecw \in W \setminus (V_{\ominus} \cup V_{\oplus})$.}
			\end{cases}
	\end{equation*}
	Then for~\mbox{$\vecx \in B$} we have the estimate
	\begin{equation*}
		\mu_{\tilde{y}}\{f(\vecx) = -1\}
			\,\leq\, (1 - p)^M
			\,\leq\, \exp\left( - p \lambda \binom{a}{t}\right)
			\,=\, \exp( - \lambda \varrho) \,,
	\end{equation*}
	where we write~\mbox{$p := \varrho/\binom{a}{t}$}
	with~\mbox{$0 < \varrho < \binom{a}{t}$}.
	The other estimate is
	\begin{multline} \label{eq:q_0}
		\mu_{\tilde{y}}\{f(\vecx) = -1\}
			\,\geq\, (1 - p)^{\binom{b}{t}}
			\,=\, \exp\left(\log(1 - p) \, \binom{b}{t}\right) \\
			\,\geq\,
				\exp\Biggl(
							- \varrho \, \sigma_{\alpha\beta\tau}(d) \,
								\biggl(
									\underbrace{\frac{1}{2}
															+ \frac{1}{2\,(1- \varrho/\gamma_{\alpha\tau}(d))}
														}_{=: \kappa_{\varrho\gamma}(d)}
								\biggr)
						\Biggr)
			\,=:\, q_0(\alpha,\beta,\tau,\varrho,d) \\
			\,\xrightarrow[d \rightarrow \infty]{}\,
				\exp\Bigl(-\varrho \,
										\exp\left((\beta-\alpha)\, \tau\right)
						\Bigr)\,.
	\end{multline}
	Here we used that, for $0 \leq p < 1$,
	\begin{equation*}
		0 \,\geq\, \log(1-p)
			%= -\left(p + \frac{1}{2(1-\xi)^2} p^2) 	%Taylor's theorem
			%\stackrel{0 \leq \xi \leq p}{\geq} - p \left(1 + \frac{p}{2(1-p)^2}\right)
			\,=\, - \left(p+\sum_{k=2}^{\infty} \frac{p^k}{k}\right)
			\,\geq\, - \left(p+\sum_{k=2}^{\infty} \frac{p^k}{2}\right)
			\,=\, -p\,\left(\frac{1}{2} + \frac{1}{2\,(1-p)}\right) \,,
	\end{equation*}
	together with the estimates
	\begin{equation*}
		p \, \binom{b}{t} \,\leq\, \varrho \, \sigma_{\alpha\beta\tau}(d) \,,
	\end{equation*}
	and
	\begin{equation} \label{eq:gamma}
		\binom{a}{t}
			\,\geq\, \left(\frac{a}{t}\right)^t
			\,\geq\,
				\left(\frac{\sqrt{d}+\alpha}{2\left(\tau + 1 /\sqrt{d}\right)}
				\right)^{\tau \sqrt{d}}
			\,=:\, \gamma_{\alpha\tau}(d) \,\geq\, 1 \,.
	\end{equation}
	The last estimate~\eqref{eq:gamma} relies on the constraint
	\mbox{$\alpha - 2\tau \geq - \sqrt{d} + 2/\sqrt{d}$}
	(and hence \mbox{$t < a$}).
	Note that \mbox{$\gamma_{\alpha\tau}(d)
								\xrightarrow[d \rightarrow \infty]{} \infty$}
	implies
	\mbox{$\kappa_{\varrho\gamma}(d)
								\xrightarrow[d \rightarrow \infty]{} 1$}.
	It follows that for~\mbox{$\vecx \in B$},
	\begin{multline} \label{eq:def_q}
		\min\Bigl\{\mu_{\tilde{y}}\{f(\vecx) = +1\},\,
							\mu_{\tilde{y}}\{f(\vecx) = -1\}
				\Bigr\}\\
			\,\geq\, \min\left\{1- \exp\left( - \varrho \, \lambda\right), \,
										q_0(\alpha,\beta,\tau,\varrho,d)
								\right\}
			\,=:\, q(\alpha,\beta,\tau,\lambda,\varrho,d) \,.
	\end{multline}
	
	\proofstep{proof:monoLB5}{%
		The final error bound.}
	By Bakhvalov's trick~\eqref{eq:Bakh}
	we obtain the final estimate
	for~\mbox{$n \leq \nu \, 2^{\tau\sqrt{d}} = \nu \, \exp(\sigma\sqrt{d})$},
	where~\mbox{$\sigma = \tau \, \log 2$},
	\begin{align}
		e^{\ran}(n,F_{\mon}^d)
			&\,\geq\, \inf_{A_n} e(A_n,\mu)
				\nonumber\\
		[\text{\eqref{eq:muyoptg} for any valid $\tilde{y}$}]\quad
			&\,\geq\, 2\, \frac{\# B}{\#\{0,1\}^d}
				\, \min\bigl\{\mu_{\tilde{y}}\{f(\vecx) = 0\},\,
											\mu_{\tilde{y}}\{f(\vecx) = 1\}
									\mid \vecx \in B
								\bigr\} 
				\nonumber\\
		[\text{\eqref{eq:|B|>=r0-nu*r1} and \eqref{eq:def_q}}]\quad
			&\,\geq\, 2\,r_B(\alpha,\beta,\tau,\lambda,\nu,d)
							\cdot q(\alpha,\beta,\tau,\lambda,\varrho,d)
				\nonumber\\
			&\,=:\, \hat{\eps}(\alpha,\beta,\tau,\lambda,\nu,\varrho,d)
				\label{eq:err>=(r0-nu*r1)*q}\,.
	\end{align}
	Fixing~\mbox{$d = d_0$}, and with appropriate values
	for the other parameters as discussed in \proofstepref{proof:monoLB3.4},
	we can provide~\mbox{$r_B(\ldots) > 0$}.
	The value of $\varrho$~should be adapted for that~$q(\ldots)$ is big
	(and positive in the first place).
	The function \mbox{$\hat{\eps}(\ldots,d)$} is monotonically
	increasing in~$d$, so an error bound for~\mbox{$d=d_0$}
	implies error bounds for~\mbox{$d \geq d_0$} while keeping in particular
	$\nu$ and~$\tau$.
	Clearly, for any~\mbox{$0 < \eps_0 < \hat{\eps}(\ldots)$},
	this gives lower bounds for the information complexity,
	\begin{equation*}
		n^{\ran}(\eps_0,F_{\mon}^d)
			\,>\, \nu \, \exp(\sigma \sqrt{d}) \,,
		\qquad\text{for $d \geq d_0$.}
	\end{equation*}
	
%	Note that the definition of the measure does not depend on~$n$.
%	Moreover, by the above calculations,
%	we have a general lower bound~\mbox{$\hat{\eps}(n(\vecy))$} which holds
%	for the conditional error
%	in the case of varying cardinality as well.
%	This estimate can be seen as a convex function
%	in~\mbox{$\bar{n} \geq 0$}, indeed, fixing all parameters
%	but~\mbox{$\nu := \bar{n} \, 2^{-\tau\sqrt{d}}$}, we have
%	\begin{equation*}
%		\hat{\eps}(\bar{n})
%			= [r_0 - \bar{n} \, 2^{-\tau\sqrt{d}} \, r_1]_+ \, q \,.
%	\end{equation*}
%	By \lemref{lem:n(om,f)avgspecial} the lower bounds
%	extend to methods with varying cardinality.
	
	\proofstep{proof:monoLB6}{%
		Smaller~$\eps$ and bigger exponent~$\tau$ for higher dimensions.}
	%, $\eps \gtrsim 1/\sqrt{d}$.}

The following sophisticated considerations lead to results of a new quality
compared to Blum et al.~\cite{BBL98}.
	If we have a lower bound~\mbox{%
		$\hat{\eps}(\alpha_0,\beta_0,\tau_0,\lambda,\nu,\varrho,d_0)
			> \eps_0$},
	then for~\mbox{$d \geq d_0$}
	and \mbox{$\tau_0 \leq \tau \leq \tau_0 \sqrt{d/d_0}$}
	we obtain the lower bound
	\begin{equation} \label{eq:eps(tau)}
		\hat{\eps}(\alpha(\tau),\beta(\tau),\tau,\lambda,\nu,\varrho,d)
			\,>\, \eps_0 \, \frac{\tau_0}{\tau}
			\,=:\, \eps
	\end{equation}
	with~\mbox{$\alpha(\tau) = \alpha_0 \, \frac{\tau_0}{\tau}$}
	and \mbox{$\beta(\tau) = \beta_0 \, \frac{\tau_0}{\tau}$},
	supposing the additional conditions~\mbox{$\beta_0 \leq \tau_0$}
	and \mbox{$-\tau_0 \leq \alpha_0 \leq 0$}.
	This provides the estimate
	\begin{equation*}
		n^{\ran}(\eps,F_{\mon}^d)
			\,\geq\, \nu \, 2^{\tau \sqrt{d}}
			\,=\, \nu \, 2^{\tau_0 \, \eps_0 \, \sqrt{d} / \eps} \,,
	\end{equation*}
	valid under the constraint
	$\eps_0 \, \sqrt{d_0/d} \leq \eps \leq \eps_0$.
	This is the theorem with~$c = \tau_0 \eps_0 \log 2$.
	
	In detail,
	showing~$\eqref{eq:eps(tau)}$ can be split into proving
	inequalities for the factors of~$\hat{\eps}(\ldots)$
	as defined in~\eqref{eq:err>=(r0-nu*r1)*q}, namely
	\begin{align}
		q(\alpha(\tau),\beta(\tau),\tau,\lambda,\varrho,d)
			&\,\geq\, q(\alpha_0,\beta_0,\tau_0,\lambda,\varrho,d_0) \,,
				\label{eq:q(tau)}
		\qquad \text{and} \\
		r_B(\alpha(\tau),\beta(\tau),\tau,\lambda,\nu,d)
			&\,\geq\, \frac{\tau_0}{\tau} \, r_B(\alpha_0,\beta_0,\tau_0,\lambda,\nu,d_0)
						\,.
				\label{eq:r_B(tau)}
	\end{align}
	Both factors contain the term~$\sigma_{\alpha\beta\tau}(d)$
	defined in~\eqref{eq:sigma_abt(d)}.
	With the given choice of~$\alpha(\tau)$ and $\beta(\tau)$,
	the product~$(\beta-\alpha)\tau = (\beta_0 - \alpha_0)\tau_0$
	is kept constant, which is the key element for the estimate
	\begin{equation} \label{eq:sigma(tau)}
		\sigma_{\alpha\beta\tau}(d)
			\,\leq\, \sigma_{\alpha_0,\beta_0,\tau_0}(d_0) \,.
	\end{equation}
	Here we also need
	\begin{equation*}
		1 \,\leq\, \kappa_{\alpha\tau}(d)
			\,=\, \left(1 + \frac{\alpha_0 \frac{\tau_0}{\tau} - 2\tau}{\sqrt{d}}
				\right)^{-1}
			\,\leq\, \left(1 + \frac{\alpha_0 - 2 \tau_0}{\sqrt{d_0}}\right)^{-1}
			\,=\, \kappa_{\alpha_0,\tau_0}(d_0) \,,
	\end{equation*}
	as well as
	\begin{align*}
		0 \,\leq\, K_{\alpha\beta\tau}(d)
			&\,=\, \frac{(\beta_0-\alpha_0) \frac{\tau_0}{\tau}
							}{\sqrt{d} + \alpha_0 \frac{\tau_0}{\tau} - 2\tau}
			\,\leq\, \frac{\beta_0-\alpha_0
								}{\sqrt{d} + (\alpha_0 - 2\tau_0)\sqrt{d/d_0}} \\
			&\,\leq\, \frac{\beta_0-\alpha_0
								}{\sqrt{d_0} + \alpha_0 - 2\tau_0}
			\,=\, K_{\alpha_0,\beta_0,\tau_0}(d_0) \,,
	\end{align*}
	where we used~$\tau_0/\tau \leq 1 \leq \sqrt{d/d_0}$
	combined with~$\alpha_0 \leq 0$,
	and $\tau \leq \tau_0\sqrt{d/d_0}$.
%	Here we also need
%	\begin{align*}
%		1 \leq \kappa_{\alpha\tau}(d)
%			&= \left(1 + \frac{\alpha_0 \frac{\tau_0}{\tau} - 2\tau}{\sqrt{d}}
%				\right)^{-1} \\
%		[\alpha_0 \leq 0 \text{ and } \tau_0 \leq \tau]\qquad
%			&\leq \left(1 + \frac{\alpha_0 \frac{\tau}{\tau_0} - 2\tau}{\sqrt{d}}
%						\right)^{-1} \\
%		[\alpha_0 - 2\tau_0 \leq 0 \text{ and } \tau \leq \tau_0 \sqrt{d/d_0}]\qquad
%			&\leq \left(1 + \frac{\alpha_0 - 2 \tau_0}{\sqrt{d_0}}\right)^{-1}
%			\,=\, \kappa_{\alpha_0,\tau_0}(d_0) \,,
%	\end{align*}
%	as well as
%	\begin{align*}
%		0 \leq K_{\alpha\beta\tau}(d)
%			&= \frac{(\beta_0-\alpha_0) \frac{\tau_0}{\tau}
%							}{\sqrt{d} + \alpha_0 \frac{\tau_0}{\tau} - 2\tau} \\
%		[\alpha_0 \leq 0 \text{ and } \tau_0 \leq \tau]\qquad
%			&\leq \frac{\beta_0-\alpha_0
%								}{\sqrt{d} + \alpha_0 \frac{\tau}{\tau_0} - 2\tau} \\
%		[\alpha_0 - 2\tau_0 \leq 0 \text{ and } \tau \leq \tau_0 \sqrt{d/d_0}]\qquad
%			&\leq \frac{\beta_0-\alpha_0
%								}{\sqrt{d} + (\alpha_0 - 2\tau_0)\sqrt{d/d_0}} \\
%		[d \geq d_0]\qquad
%			&\leq \frac{\beta_0-\alpha_0
%								}{\sqrt{d_0} + \alpha_0 - 2\tau_0}
%			\,=\, K_{\alpha_0,\beta_0,\tau_0}(d_0) \,.
%	\end{align*}
	
	Showing~\eqref{eq:q(tau)},
	by definition of~$q(\ldots)$ in \eqref{eq:def_q},
	means examining
	\mbox{$q_0(\alpha(\tau),\beta(\tau),\tau,\varrho,d)$},
	see~\eqref{eq:q_0}.
	Knowing \eqref{eq:sigma(tau)}, the remaining consideration is
	\begin{align*}
		\gamma_{\alpha\tau}(d)
			&\,=\, \left(\frac{\sqrt{d}+\alpha_0 \frac{\tau_0}{\tau}
										}{2\left(\tau + 1 /\sqrt{d}\right)}
				\right)^{\tau \sqrt{d}}
			\,\geq\, \left(\frac{\sqrt{d}+\alpha_0 \sqrt{d/d_0}
												}{2\left(\tau_0 + \sqrt{d_0} / d\right) \sqrt{d/d_0}}
						\right)^{\tau_0 \sqrt{d_0}} \\
			&\,\geq\, \left(\frac{\sqrt{d_0}+\alpha_0
											}{2\left(\tau_0 + 1 /\sqrt{d_0}\right)}
						\right)^{\tau_0 \sqrt{d_0}}
			\,=\, \gamma_{\alpha_0,\tau_0}(d_0)
			\,\geq\, 1 \,,
	\end{align*}
	once more using $\tau_0/\tau \leq 1 \leq \sqrt{d/d_0}$
	combined with~$\alpha_0 \leq 0$,
	and $\tau \leq \tau_0\sqrt{d/d_0}$.
	
%	\begin{align*}
%		\gamma_{\alpha\tau}(d)
%			&= \left(\frac{\sqrt{d}+\alpha_0 \frac{\tau_0}{\tau}
%										}{2\left(\tau + 1 /\sqrt{d}\right)}
%				\right)^{\tau \sqrt{d}} \\
%		[\alpha_0 \leq 0 \text{ and } \tau_0 \leq \tau]\qquad
%			&\geq \left(\frac{\sqrt{d}+\alpha_0 \frac{\tau}{\tau_0}
%											}{2\left(\tau + 1 /\sqrt{d}\right)}
%						\right)^{\tau_0 \sqrt{d}} \\
%		[\tau \leq \tau_0 \sqrt{d/d_0} \text{ and } d \geq d_0]\qquad
%			&\geq \left(\frac{\sqrt{d}+\alpha_0 \sqrt{d/d_0}
%												}{2\left(\tau_0 + 1 /\sqrt{d_0}\right) \sqrt{d/d_0}}
%						\right)^{\tau_0 \sqrt{d_0}} \\
%			&= \left(\frac{\sqrt{d_0}+\alpha_0
%											}{2\left(\tau_0 + 1 /\sqrt{d_0}\right)}
%						\right)^{\tau_0 \sqrt{d_0}}
%			\,=\, \gamma_{\alpha_0,\tau_0}(d_0) \,\geq\, 1 \,.
%	\end{align*}
	
	Showing~\eqref{eq:r_B(tau)} is more complicated,
	in view of the definition of~$r_B(\ldots)$ in~\eqref{eq:|B|>=r0-nu*r1},
	we need estimates on~$C_{\alpha\beta}$, $C_{\alpha\beta\tau}$
	and~$\kappa_{\tau}(d)$.
	The easiest part is the correcting
	factor~\mbox{$\kappa_{\tau}(d)$}, see~\eqref{eq:k_t(d)},
	for which by virtue of $\tau \leq \tau_0 \sqrt{d/d_0}$ and $d \geq d_0$
	we have
	\begin{equation*}
		1 \,\leq\, \kappa_{\tau}(d)
			\,=\, \left(1-\tau/\sqrt{d}-1/d \right)^{-1/2}
			\,\leq\, \left(1-\tau_0/\sqrt{d_0}-1/d_0\right)^{-1/2}
			\,=\, \kappa_{\tau_0}(d_0) \,.
	\end{equation*}
	For the other terms we need to take a detailed look at Gaussian integrals.
	First we have
	\begin{align*}
		C_{\alpha\beta}
			&\,=\, \frac{1}{\sqrt{2\pi}}
						\int_{\alpha}^{\beta}
								\exp\left(-\frac{x^2}{2}\right)
							\dint x\\
		[x = {\textstyle \frac{\tau_0}{\tau}} \, u]\quad
			&\,=\, \frac{\tau_0}{\tau \, \sqrt{2\pi}}
						\int_{\alpha_0}^{\beta_0}
								\exp\left(- \left(\frac{\tau_0}{\tau}\right)^2
														\, \frac{u^2}{2}
										\right)
							\dint u\\
		[\tau \geq \tau_0]\quad
			&\,\geq\, \frac{\tau_0}{\tau \, \sqrt{2\pi}}
							\int_{\alpha_0}^{\beta_0}
									\exp\left( - \frac{u^2}{2}\right)
								\dint u
			\,=\, \frac{\tau_0}{\tau} \, C_{\alpha_0, \beta_0} \,.
	\end{align*}
	The second Gaussian integral is a bit trickier,
	\begin{align*}
		C_{\alpha\beta\tau}
			&\,=\, \frac{1}{\sqrt{2\pi}}
						\int_{\alpha-\tau}^{\beta-\tau}
								\exp\left(-\frac{x^2}{2}\right)
							\dint x\\
		[\text{subst.\ }x + \tau = {\textstyle \frac{\tau_0}{\tau}} \, (u + \tau_0)]\quad
			&\,=\, \frac{\tau_0}{\tau \, \sqrt{2\pi}}
						\int_{\alpha_0-\tau_0}^{\beta_0-\tau_0}
								\exp\left(- \frac{1}{2} \,
															\left(\frac{\tau_0}{\tau}
																			\, (u + \tau_0)
																		- \tau
															\right)^2
										\right)
							\dint u\\
		[\textstyle{\frac{\tau_0}{\tau} \, (u + \tau_0) - \tau} \leq u \leq 0]\quad
			&\,\leq\, \frac{\tau_0}{\tau \, \sqrt{2\pi}}
							\int_{\alpha_0-\tau_0}^{\beta_0-\tau_0}
									\exp\left( - \frac{u^2}{2}\right)
								\dint u
			\,=\, \frac{\tau_0}{\tau} \, C_{\alpha_0, \beta_0, \tau_0} \,.
	\end{align*}
	Here, \mbox{$u \leq 0$} followed
	from the the upper integral boundary~\mbox{$u \leq \beta_0-\tau_0$}
	and the assumption~\mbox{$\beta_0 \leq \tau_0$}.
	The other constraint,
	\mbox{$\psi(\tau) := \frac{\tau_0}{\tau} \, (u + \tau_0) - \tau
						\leq u$},
	followed from \mbox{$\psi(\tau_0) = u$}
	and the monotonous decay of~\mbox{$\psi(\tau)$}
	for~\mbox{$\tau \geq \tau_0$}:
	\begin{align*}
		\psi'(\tau)
			&\,=\, -\frac{\tau_0}{\tau^2} (u + \tau_0) - 1 \\
		[\alpha_0-\tau_0 \leq u]\qquad
		 &\,\leq\,
%			\stackrel{[\alpha_0-\tau_0 \leq u]}{\leq}
				-\frac{\tau_0}{\tau^2}\, \alpha_0 - 1 \\
		[\alpha_0 \geq -\tau_0]\qquad
			&\,\leq\,
%			\stackrel{[\alpha_0 \geq -\tau_0]}{\leq}
				\frac{\tau_0^2}{\tau^2} - 1 \\
		[\tau \geq \tau_0]\qquad
%			\stackrel{[\tau \geq \tau_0]}{\leq}
			&\,\leq\, 0 \,.
	\end{align*}
	Indeed, these estimates on $\kappa_{\tau}(d)$, $C_{\alpha\beta}$,
	and $C_{\alpha\beta\tau}$, together with the condition $d \geq d_0$,
	prove~$\eqref{eq:r_B(tau)}$.
	
\proofstep{proof:monoLB7}{%
		Example for numerical values.}
	The stated numerical values result from the setting
	\mbox{$\alpha_0 = -0.33794$}, \mbox{$\beta_0 = 0.46332$},
	\mbox{$\tau_0 = 1.47566 > \frac{1}{\log 2}$} and \mbox{$\lambda = 0.77399$}.
	We adapt \mbox{$\varrho = 0.25960$},
	and for starting dimension~\mbox{$d_0 = 100$}
	and information budget \mbox{$n_0 = 108$}
	(choosing~$\nu = n_0 \cdot 2^{-\tau_0 \sqrt{d_0}}$ accordingly)
	we obtain the lower error bound \mbox{$\hat{\eps}(\ldots)
																					= 0.0666667...
																					> \frac{1}{15} =: \eps_0$}.
\qed
\vspace{\baselineskip}
	
One might try to find different values for different~$d_0$ and $n_0$,
but since reasonable lower bounds start with~$d_0 = 100$
while implementation of algorithms seems hopeless in that dimension,
the result should be seen as rather theoretic.

\section*{Acknowledgements}

This paper is based on a chapter of the author's PhD thesis~\cite{Ku17},
which has been financially supported by the DFG Research Training Group 1523.
I wish to express my deepest gratitude to my PhD supervisor Erich Novak
for his advice during my work on these results.
I also wish to acknowledge valuable hints to literature by Mario Ullrich.

\end{document}